\theoremstyle{definition}
\newtheorem*{claim}{Claim}
\theoremstyle{remark}
\theoremstyle{plain}
\newtheorem{thm}{Theorem}
\newtheorem{lem}{Lemma}
\newcommand*{\house}[1]{%
	\mathord{%
		\mathpalette\@house{#1}%
	}%
}
\newcommand*{\@house}[2]{%
	\dimen@=\fontdimen8 %
	\ifx#1\scriptscriptstyle\scriptscriptfont
	\else\ifx#1\scriptstyle\scriptfont
	\else\textfont\fi\fi
	3 %
	\sbox0{%
		$#1%
		\vrule width\dimen@\relax
		\overline{%
			\kern2\dimen@
			\begingroup 
			#2%
			\endgroup
			\kern2\dimen@
		}%
		\vrule width\dimen@\relax
		\mathsurround=1.5\dimen@ 
		$%
	}%
	\ht0=\dimexpr\ht0-\dimen@\relax
	\dp0=\dimexpr\dp0+2\dimen@\relax
	\vbox{%
		\kern\dimen@ 
		\copy0 %
	}%
}
\newcounter{case}
\newcommand{\case}[1]{\vspace{8pt}\\\refstepcounter{case}\textbf{Case \arabic{case} (#1).}}
\newcommand{\C}{\mathbb{C}}
\newcommand{\N}{\mathbb{N}}
\newcommand{\Q}{\mathbb{Q}}
\title{Algebraic degree of series of reciprocal algebraic integers}
\author{Mathias L{\o}kkegaard Laursen}
\address{
	M. L. Laursen, 
	Department of Mathematics, 
	Aarhus University, 
	Ny Munkegade 118, 
	DK-8000 Aarhus C, 
	Denmark
}
\email{mll@math.au.dk}
\thanks{This research is supported by the Independent Research Fund Denmark}
\date{07, 11, 2022}
\begin{document}
	\begin{abstract}
		In this paper, I give sufficient conditions for any linear combination in $\Q$ of numbers $\sum_{n=1}^{\infty}\frac{b_{1,n}}{\alpha_{1,n}}$, $\ldots$, $\sum_{n=1}^{\infty}\frac{b_{K,n}}{\alpha_{K,n}}$ to have algebraic degree greater than an arbitrary fixed integer $D$ when the numbers $\alpha_{i,n}$ are algebraic integers of sufficiently rapidly increasing modulus and the $b_{i,n}$ are positive integers that are not too large.
	\end{abstract}

	\maketitle
\section{Introduction}
In 1975, Erd\H{o}s \cite{ErdosIrrOfInfSeries} gave a sufficient condition for an increasing series of reciprocal integers to be irrational, as stated in the below theorem.
\begin{thm}[Erd\H{o}s]
	\label{thm:Erdos}
	Let $\{a_n\}_{n\in\N}$ be an increasing sequence of natural numbers such that $a_n>n^{1+\varepsilon}$ for some $\varepsilon>0$ for all $n$ sufficiently large.
	\\
	If $\limsup_{n\to\infty} a_n^{1/2^n} = \infty$, then $\sum_{n=1}^{\infty}\frac{1}{a_n}$ is irrational.
\end{thm}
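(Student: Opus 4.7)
Argue by contradiction: suppose $\sum_{n=1}^{\infty}1/a_{n}=p/q$ with $p,q\in\mathbb{N}$. Set $A_{N}:=\prod_{n=1}^{N}a_{n}$ and $r_{N}:=\sum_{n>N}1/a_{n}$. Multiplying the identity $p/q=\sum_{n\le N}1/a_{n}+r_{N}$ through by $qA_{N}$ clears all denominators, so $S_{N}:=qA_{N}r_{N}=pA_{N}-q\sum_{n=1}^{N}A_{N}/a_{n}$ is an integer, and since $r_{N}>0$ it is a positive integer. Hence $S_{N}\ge 1$ for every $N$, and the goal is to produce arbitrarily large $N$ with $S_{N}<1$, contradicting this.

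The tail $r_{N}$ can be bounded using both hypotheses on $\{a_{n}\}$: by monotonicity, $a_{n}\ge a_{N+1}$ for $n\ge N+1$; and $a_{n}>n^{1+\varepsilon}$ controls the terms further out. Splitting the tail at the threshold $M:=\lfloor a_{N+1}^{1/(1+\varepsilon)}\rfloor$ (where the two lower bounds coincide) yields $r_{N}\le C_{\varepsilon}\,a_{N+1}^{-\varepsilon/(1+\varepsilon)}$ for some constant $C_{\varepsilon}$ depending only on $\varepsilon$. Consequently it suffices to exhibit $N$ with $\log A_{N}<\tfrac{\varepsilon}{1+\varepsilon}\log a_{N+1}-\log(qC_{\varepsilon})$.

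To locate such an $N$, the $\limsup$ hypothesis is used as follows. For each $K>0$, let $n_{K}$ be the least index with $\log a_{n_{K}}/2^{n_{K}}>K$; by minimality $\log a_{m}\le K\cdot 2^{m}$ for every $m<n_{K}$, so by geometric summation $\log A_{n_{K}-1}\le K(2^{n_{K}}-2)$, whereas $\log a_{n_{K}}>K\cdot 2^{n_{K}}$. As $K\to\infty$, the gap $\log a_{n_{K}}-\log A_{n_{K}-1}\ge 2K$ widens without bound, and setting $N=n_{K}-1$ one hopes to force $S_{n_{K}-1}<1$ for $K$ sufficiently large.

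\textbf{Main obstacle.} The naive combination of the estimates above gives only $\log A_{n_{K}-1}/\log a_{n_{K}}<1-2^{1-n_{K}}$, which tends to $1$ rather than falling below the required threshold $\varepsilon/(1+\varepsilon)<1$. Closing this gap is the technical core of the argument, and I would do so by exploiting that $\limsup a_{n}^{1/2^{n}}=\infty$ also forces $\limsup a_{n+1}/a_{n}=\infty$ (otherwise $\log a_{n}$ grows only linearly in $n$, contradicting the hypothesis), and that on the subsequence where consecutive ratios are large, the tail estimate sharpens to $r_{N}=(1+o(1))/a_{N+1}$ rather than the generic polynomial bound. Passing to the subsequence common to both constructions, the required inequality relaxes to $\log A_{N}<\log a_{N+1}+O(1)$, which does follow from the estimates on $n_{K}$ above, yielding $S_{n_{K}-1}<1$ and the desired contradiction.
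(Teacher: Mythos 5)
The paper does not contain a proof of Theorem~\ref{thm:Erdos}: it is stated as classical background with a citation to Erd\H{o}s's 1975 paper, and the paper's own arguments (Lemmas~\ref{Prop:eq AK2} and~\ref{Prop:eq AK2 negated}) are aimed at Theorem~\ref{conj problem 1}, whose $\limsup$ hypothesis is finite and therefore does not directly subsume Theorem~\ref{thm:Erdos}. So there is no in-paper proof to compare against; I will instead assess your argument on its own terms.

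Your set-up through the reduction ``it suffices to find arbitrarily large $N$ with $\log A_N<\tfrac{\varepsilon}{1+\varepsilon}\log a_{N+1}-\log(qC_\varepsilon)$'' is correct and standard, and your tail bound $r_N\le C_\varepsilon a_{N+1}^{-\varepsilon/(1+\varepsilon)}$ is exactly Lemma~\ref{LemmaAK7}. You also correctly diagnose the obstacle: taking $n_K$ minimal with $a_{n_K}^{1/2^{n_K}}>K$ only yields $\log A_{n_K-1}<\log a_{n_K}-2K$, i.e.\ a ratio tending to $1$, which does not cross the threshold $\varepsilon/(1+\varepsilon)<1$.

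The proposed repair, however, has a genuine gap. First, the two subsequences you want to intersect do not line up. The minimality of $n_K$ forces a huge ratio $a_{n_K}/a_{n_K-1}$, that is, a large gap \emph{below} $a_{N+1}$ when $N=n_K-1$. But the tail $r_N=1/a_{N+1}+r_{N+1}$ depends on $a_{N+2},a_{N+3},\dots$, about which the $\limsup$ construction says nothing; nothing forces $a_{n_K+1}/a_{n_K}$ to be large along this subsequence. Second, even on a subsequence where $a_{N+2}/a_{N+1}$ is large, the bound one actually has is $r_{N+1}\le C_\varepsilon a_{N+2}^{-\varepsilon/(1+\varepsilon)}$; since $\varepsilon/(1+\varepsilon)<1$, this is $o(1/a_{N+1})$ only when $a_{N+2}\gg a_{N+1}^{(1+\varepsilon)/\varepsilon}$, a \emph{polynomial} gap, not merely a large ratio. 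The observation that $\limsup a_{n+1}/a_n=\infty$ is therefore far too weak to deliver $r_N=(1+o(1))/a_{N+1}$, and without that sharpening the threshold $\varepsilon/(1+\varepsilon)$ is never crossed. As written, the ``main obstacle'' paragraph asserts the desired conclusion rather than proving it; the argument does not close, and a different idea (Erd\H{o}s's original one, which the paper does not reproduce) is needed to finish.
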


Later, in \cite{HanclCriterion}, Han\v{c}l extended this theorem to also cover the case of 
\begin{equation*}
	1\leq \limsup_{n\to\infty}a_n^{1/2^n}< \limsup_{n\to\infty}a_n^{1/2^n}<\infty,
\end{equation*} 
while also providing a related condition for finitely many series of fractions $\sum_{n=1}^{\infty} \frac{b_{i,n}}{a_{i,n}}$ ($i=1,\ldots, K$) with sufficiently small and positive $b_{i,n}$ to be irrational and linear independent over $\Q$.
\begin{thm}[Han\v{c}l]
	\label{thm:Hancl}
	Let $K\in\N$, and let $A_1,A_2,a,\varepsilon>0$ be real numbers such that $a<1\leq A_1<A_2$.
	For $i=1,\ldots,K$, let $\{a_{i,n}\}_{n\in\N}$ and $\{b_{i,n}\}_{n\in\N}$ each be sequences of natural numbers.
	Suppose that
	\begin{align*}
		\forall n\in\N:	\quad&	n^{1+\varepsilon} \leq a_{1,n} < a_{1,n+1}	
		,\\
		&	\limsup_{n\to\infty} {a_{1,n}}^{ 1/(K+1)^n } = A_2	
		,\\
		&	\liminf_{n\to\infty} {a_{1,n}}^{ 1/(K+1)^n } = A_1	
		,\\
		\forall n\in\N\ \forall 1\leq i\leq K:	\quad &	b_{i,n} < 2^{(\log_2 {a_{1,n}})^a}
		,\\
		\forall 1\leq i<j\leq K:	\quad & \lim_{n\to\infty} \frac{b_{i,n} a_{j,n}}{a_{i,n} b_{j,n}} = 0
		,\\
		\forall 1<i\leq K:	\quad &	{a_{i,n}}\ 2^{-(\log_2 {a_{1,n}})^a} < {a_{1,n}} < {a_{i,n}}\ 2^{(\log_2 {a_{1,n}})^a}
		.
	\end{align*}
	Then $\sum_{n=1}^{\infty} \frac{b_{1,n}}{a_{1,n}}, \ldots, \sum_{n=1}^{\infty} \frac{b_{K,n}}{a_{K,n}}$ are irrational and linearly independent over $\Q$.
\end{thm}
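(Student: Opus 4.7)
The plan is a proof by contradiction.  Suppose there exist integers $c_0,c_1,\ldots,c_K$, not all zero, with $c_0 + \sum_{i=1}^{K} c_i S_i = 0$, where $S_i := \sum_{n=1}^{\infty} b_{i,n}/a_{i,n}$.  For each index $N$, split $S_i = S_{i,N} + T_{i,N}$ with $S_{i,N}$ the partial sum up to $n = N$, and let $Q_N := \prod_{i=1}^{K}\prod_{n=1}^{N} a_{i,n}$.  The assumption rewrites as
\begin{equation*}
Q_N \bigl( c_0 + \textstyle\sum_{i=1}^{K} c_i S_{i,N} \bigr) \;=\; -Q_N \sum_{i=1}^{K} c_i T_{i,N},
\end{equation*}
whose left side is a rational integer.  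I aim to show that, for infinitely many $N$, the right side has absolute value strictly less than $1$, forcing this integer to vanish.

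The size bounds come from two ingredients.  First, the comparability hypothesis together with $b_{i,n} < 2^{(\log_2 a_{1,n})^a}$ and $a<1$ permit replacement of every $a_{i,n}$ and $b_{i,n}$ by $a_{1,n}$ at a sub-polynomial cost.  Second, the doubly-exponential growth implied by $\liminf a_{1,n}^{1/(K+1)^n} = A_1 > 0$ forces each tail $T_{i,N}$ to be comparable to its leading term $b_{i,N+1}/a_{i,N+1}$.  Combined, these yield
\begin{equation*}
\Bigl| Q_N \sum_{i=1}^{K} c_i T_{i,N} \Bigr| \;\lesssim\; \frac{\prod_{n=1}^{N} a_{1,n}^{\,K}}{a_{1,N+1}}\cdot 2^{\,o(\log a_{1,N+1})}.
\end{equation*}
Choosing $N+1$ from the $\limsup$-subsequence so that $a_{1,N+1} \geq (A_2-\varepsilon)^{(K+1)^{N+1}}$ and bounding $\prod_{n\leq N} a_{1,n}$ by $(A_2+\varepsilon)^{((K+1)^{N+1}-(K+1))/K}$ gives a ratio behaving like $((A_2+\varepsilon)/(A_2-\varepsilon))^{(K+1)^{N+1}}$, which does not automatically tend to $0$.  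This is the principal obstacle: the gap $A_1 < A_2$ must be used to sharpen the partial-product estimate below the trivial $\limsup$-bound.  I expect this is achieved by a $(K+1)$-level Vandermonde-type argument: consider the $(K+1)\times(K+1)$ matrix $V := (S_{i,N_j})_{0\leq i,j\leq K}$ (with $S_{0,N_j} := 1$) at truncation levels $N_0 < \cdots < N_K$, observe that $Vc$ equals the tail vector $-\bigl(\sum_i c_i T_{i,N_j}\bigr)_j$, clear denominators so that $\det V$ becomes an integer, and exploit the fact that using $K+1$ levels converts the exponent $K$ in $Q_N$ into $K+1$, exactly matching the growth rate $(K+1)^n$ in the hypothesis.

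Once $Q_N \sum_i c_i T_{i,N} = o(1)$ along an infinite set of $N$'s, the left-side integer must vanish there, giving $c_0 + \sum_{i=1}^{K} c_i S_{i,N} = 0$ for those $N$.  Differencing two such consecutive indices $N < N'$ yields $\sum_{i=1}^K c_i (S_{i,N'}-S_{i,N}) = 0$, and a further asymptotic reduction (allowing $N' \to N+1$) gives a relation $\sum_{i=1}^K c_i\,b_{i,n}/a_{i,n} = 0$ for infinitely many $n$.  The separation hypothesis $\lim_n b_{i,n}a_{j,n}/(a_{i,n}b_{j,n}) = 0$ for $i<j$ forces the $i=K$ term to dominate; dividing by $b_{K,n}/a_{K,n}$ and sending $n\to\infty$ gives $c_K = 0$, and iterating the peeling through $i = K-1,\ldots,1$ forces $c_i = 0$ for all $i\geq 1$, whence $c_0 = 0$ as well, contradicting nontriviality.
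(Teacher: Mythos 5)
Your overall skeleton matches the paper's strategy: reduce to a Liouville-type integrality argument (clear denominators, the left side is an integer), then show the right side drops below $1$ along an infinite set of $N$'s. The paper organises exactly this as two mutually exclusive bounds (Lemma \ref{Prop:eq AK2} gives the Liouville lower bound $|\gamma(N)| \cdot (\text{denominator stuff}) \geq 1$ whenever $\deg\gamma$ is small; Lemma \ref{Prop:eq AK2 negated} gives $\liminf_N$ of the same quantity $= 0$), and you correctly spot that using only $\limsup a_{1,n}^{1/(K+1)^n}$ produces a ratio that does not visibly tend to $0$. So the diagnosis of the principal obstacle is right.

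However, the mechanism you propose to overcome it does not appear to work and is in any case not substantiated. The $(K+1)\times(K+1)$ Vandermonde idea is stated as a guess (``I expect this is achieved by\ldots''), and the claimed exponent bookkeeping -- that stacking $K+1$ truncation levels converts the exponent $K$ in $Q_N$ into $K+1$ -- is not worked out and is dubious: clearing denominators in $\det V$ multiplies by $\prod_j Q_{N_j}$, which is \emph{larger} than any single $Q_N$, so the estimates get worse, not better. The paper (following Han\v{c}l) resolves the obstacle by a completely different device: it writes $S_n := a_{1,n}^{1/(K+1)^n}$ (in the Theorem~\ref{thm:Hancl} specialisation) and uses $A_1 < A_2$ to select a specific $N$ with $S_{N+1} > \bigl(1+\tfrac{1}{(N+1)^2}\bigr)\max_{j\leq N}\max\{S_j, A_2 - 2\delta\}$; such an $N$ must exist inside any window between a ``small'' $S_{s_1}\approx A_1$ and a ``large'' $S_{s_2}\approx A_2$, since otherwise $\prod(1+j^{-2})$ (convergent) would have to bridge the gap from $A_1$ to $A_2$. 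Along that selected $N$, the required super-exponential gap between $a_{1,N+1}$ and $\prod_{n\leq N}a_{1,n}^K$ is obtained, without any determinant.

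There is a second gap in your endgame. You derive that the integer vanishes -- hence $c_0 + \sum_i c_i S_{i,N} = 0$ -- for infinitely many $N$, and then want to difference consecutive indices to isolate $\sum_i c_i b_{i,n}/a_{i,n} = 0$ for many $n$. But the ``good'' $N$'s produced by the argument are sparse and in no way consecutive, so differencing need not isolate a single $n$. The paper (and Han\v{c}l) sidesteps this entirely with a monotonicity argument: using the dominance hypothesis $\lim_n b_{i,n}a_{j,n}/(a_{i,n}b_{j,n}) = 0$, the per-$n$ increment $\sum_{j} c_j b_{j,n}/a_{j,n}$ eventually has the fixed sign of the top nonzero coefficient $c_R$, so the partial sums $\gamma_N$ are eventually strictly monotone; hence $\gamma = \gamma_N$ can hold for at most one $N$, and since it must hold for infinitely many $N$ one has a contradiction immediately, without any coefficient peeling. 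Your proposal would need both the selection-of-$N$ argument and the monotonicity observation made explicit to become a proof.
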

In 2017, Han\v{c}l and Nair \cite{Hancl-Nair} showed that integer sequences of the form $a_{n+1}=a_n^2 - a_n + 1$ with $a_1\geq 2$ will satisfy both $\sum_{n=1}^{\infty} 1/a_n \in\Q$ and 
\begin{align*}
	1 < \sqrt[4]{a_1^2 - a_1} \leq \liminf_{n\to\infty} a_n^{1/2^{n}} = \limsup_{n\to\infty} a_n^{1/2^{n}} < \infty,
\end{align*}
which exemplifies that the requirement $A_1<A_2$ cannot in general be omitted from Theorem \ref{thm:Hancl}.
The main result of \cite{Hancl-Nair} was a variant of Theorem \ref{thm:Erdos} where the $a_n$ may be square roots of positive integers when $\lim_{n\to\infty} a_n^{1/2^{n^2/2}}=\infty$.
Two years later, this result was generalised to the below theorem by Andersen and Kristensen \cite{Simon+SimonReciprocalAlgInts}, which gives a sufficient condition for $\sum_{n=1}^{\infty}1/\alpha_n$ to have large algebraic degree when $\alpha_n$ are algebraic integers of bounded degree.
\begin{thm}[Andersen and Kristensen] \label{thm:Simon}
	Let $d,D\in\N$, $\varepsilon>0$, and let $\{\alpha_n\}_{n\in\N}$ be a series of algebraic integers of maximal degree $d$ such that 
	\begin{align*}
		\forall n\in\N:	\quad &	n^{1+\varepsilon} \leq |\alpha_n|<|\alpha_{n+1}|
		,\\
		&\limsup_{n\to\infty}|\alpha_n|^{\frac{1}{ D^n\prod_{i=1}^{n-1}(d+d^i) }} = \infty
		,\\
		\forall n\in\N:	\quad &	\house{\alpha_n} = |\alpha_n|.
	\end{align*}
	Suppose that $\Re(\alpha_n)>0$ holds for all $n\in\N$ or that $\Im(\alpha_n)>0$ holds for all $n\in\N$.
	Then $\deg \sum_{n=1}^{\infty}\frac{1}{\alpha_n}$ is strictly greater than $D$.
\end{thm}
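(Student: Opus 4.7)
The natural strategy is proof by contradiction. Suppose that $\beta := \sum_{n=1}^{\infty} 1/\alpha_n$ has algebraic degree at most $D$, and let $P(x) = \sum_{k=0}^{D} c_k x^k \in \mathbb{Z}[x]$ be a nonzero polynomial of degree at most $D$ annihilating $\beta$. Introduce the partial sum $S_N := \sum_{n=1}^{N} 1/\alpha_n$, the tail $R_N := \beta - S_N$, the product $A_N := \prod_{n=1}^{N} \alpha_n$, and the auxiliary quantity
\[
E_N := A_N^D \, P(S_N).
\]
Expanding $S_N^k$ for $k\leq D$ shows that each resulting term is of the form $A_N^D/(\alpha_{i_1}\cdots\alpha_{i_k})$, in which every $\alpha_n$ is cancelled out with multiplicity at most $D$; hence $E_N$ is an algebraic integer in the number field $K_N := \mathbb{Q}(\alpha_1,\ldots,\alpha_N)$.

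The plan is then to derive a contradiction by comparing upper and lower bounds for $|E_N|$. Non-vanishing of $E_N$ for large $N$ comes from the positivity hypothesis: if $\Re(\alpha_n) > 0$ for all $n$, then $\Re(1/\alpha_n) = \Re(\alpha_n)/|\alpha_n|^2 > 0$, so $\Re(S_N)$ is strictly increasing in $N$; thus $S_N$ can coincide with any particular root of $P$ for at most one value of $N$, and since $P$ has at most $D$ roots, $P(S_N)\neq 0$ for all $N\geq N_0$. The case $\Im(\alpha_n)>0$ follows analogously from strict monotonicity of $\Im(S_N)$.

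For the upper bound on $|E_N|$ at the distinguished embedding $K_N\hookrightarrow\mathbb{C}$, I would write $P(S_N) = P(S_N) - P(\beta)$ and factor to get $|P(S_N)|\ll|R_N|$. The tail $|R_N|$ can be estimated using the strict monotonic growth of $|\alpha_n|$ together with $|\alpha_n|\geq n^{1+\varepsilon}$, which (splitting the sum at $n \approx |\alpha_{N+1}|^{1/(1+\varepsilon)}$) yields $|R_N|\ll |\alpha_{N+1}|^{-\eta}$ for some $\eta=\eta(\varepsilon)>0$, so
\[
|E_N|\ll \frac{|A_N|^D}{|\alpha_{N+1}|^{\eta}}.
\]
For any other embedding $\sigma$ of $K_N$, the house hypothesis $\house{\alpha_n}=|\alpha_n|$ gives $|\sigma(\alpha_n)|\leq|\alpha_n|$; since every monomial in the expansion of $A_N^D S_N^k$ is a pure product of the $\alpha_n$ with non-negative exponents, term-by-term bounding yields
\[
|\sigma(E_N)|\leq C_1 N^D |A_N|^D.
\]

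Because $E_N\neq 0$ is an algebraic integer of $K_N$, its norm is a nonzero rational integer, so $\prod_\sigma|\sigma(E_N)|\geq 1$. Separating the identity embedding and combining with the two estimates above forces
\[
\frac{|A_N|^D}{|\alpha_{N+1}|^{\eta}}\gg|E_N|\geq \bigl(C_1 N^D |A_N|^D\bigr)^{-([K_N:\mathbb{Q}]-1)},
\]
and applied along a subsequence of $N$ realising the $\limsup$ hypothesis this should produce a contradiction. The principal technical obstacle will be tracking $[K_N:\mathbb{Q}]$ and the conjugate bounds precisely enough to recover the sharp exponent $D^n\prod_{i=1}^{n-1}(d+d^i)$: the naive estimate $[K_N:\mathbb{Q}]\leq d^N$ and the term-by-term conjugate bound above are too crude, and the presence of the factor $d+d^i$ (rather than $d\cdot d^{i-1}$) strongly suggests that either the auxiliary quantity $E_N$ should be refined, or the field-degree bookkeeping should incorporate $\mathbb{Q}(\beta)$ of degree $D$ separately and work along the tower $\mathbb{Q}(\beta)\subset\mathbb{Q}(\beta,\alpha_1)\subset\mathbb{Q}(\beta,\alpha_1,\alpha_2)\subset\cdots$, in which each adjoined $\alpha_n$ contributes a factor of at most $d+d^{n-1}$ to the total degree.
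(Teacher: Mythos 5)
Your strategy (construct the algebraic integer $E_N = A_N^D P(S_N)$, bound $|E_N|$ at the archimedean place via the tail, bound conjugates via the house, and use $\prod_\sigma|\sigma(E_N)|\geq 1$) is the same Liouville-type skeleton that underlies both the Andersen--Kristensen proof and the generalisation proved in this paper; the paper itself does not reprove Theorem \ref{thm:Simon} but the machinery it does develop (Weil height, Mahler measure, Lemma \ref{LemmaAK6}) packages exactly the norm estimate you are reconstructing by hand. The non-vanishing argument via monotonicity of $\Re(S_N)$ or $\Im(S_N)$, the tail estimate via $|\alpha_n|\geq n^{1+\varepsilon}$, and the conjugate bound $|\sigma(E_N)|\leq C N^D|A_N|^D$ using $\house{\alpha_n}=|\alpha_n|$ are all sound, and they recover the right exponent $D\deg\gamma_N \leq D d^N$ in the final inequality.

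The genuine gap is in the last paragraph, and it is twofold. First, you never actually derive the contradiction: combining the bounds leaves you needing $|\alpha_{N+1}|^\eta \gg |A_N|^{Dd^N}$ for some $N$, and turning $\limsup_n|\alpha_n|^{1/F_n}=\infty$ (with $F_n := D^n\prod_{i=1}^{n-1}(d+d^i)$) into that inequality requires choosing $N$ along a carefully constructed running-maximum subsequence, not just ``a subsequence realising the limsup.'' Second, your diagnosis of the factor $d+d^i$ points in the wrong direction. It has nothing to do with refining the field-degree tower: adjoining $\alpha_n$ of degree $\leq d$ multiplies $[K_N:\Q]$ by at most $d$, full stop, and the naive bound $[K_N:\Q]\leq d^N$ is not ``too crude'' --- it is what the paper itself uses (via $D_N=\prod d_i$). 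The expression $F_n = D^n\prod_{i=1}^{n-1}(d+d^i)$ is a \emph{normalisation of the exponent} chosen precisely so that the telescoping inequality
\begin{align*}
F_{N+1} \;\geq\; D\,d^N \sum_{n=1}^{N} F_n
\end{align*}
holds (this is the $K=1$ case of Lemma \ref{Lemma:Exponents inequality}); the proof is an easy induction using $F_{N+1}=D(d^N+d)F_N$. That telescoping is exactly what lets you pass from $\max_{j\leq N} a_j^{1/F_j}$ small relative to $a_{N+1}^{1/F_{N+1}}$ to $a_{N+1}\gg\bigl(\prod_{n\leq N}a_n\bigr)^{Dd^N}$, which is the step your proposal is missing. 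Until that lemma and the attendant choice of $N$ are in place, the argument does not close.
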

Both in the above theorem and for the remainder of this paper, $\house{\alpha}$ denotes the house of an algebraic number $\alpha$, which is defined as the maximum modulus among the conjugates of $\alpha$.

As Andersen and Kristensen note in their paper, their proof only really needs $\house{\alpha_n}$ to be bounded by $C|\alpha_n|$ for some uniform constant $C>0$.
Similarly, the restriction on the sign of the real (or imaginary) value of $\alpha_n$ is only to enforce that all $\alpha_n$ are contained in an open half plane not containing 0, which ensures that the partial sums $\sum_{n=1}^{N}\frac{1}{\alpha_n}$ are non-zero and non-conjugate to $\sum_{n=1}^{\infty}\frac{1}{\alpha_n}$ for sufficiently large $N$.

The main result of this paper is a generalisation of Theorem \ref{thm:Hancl} in the spirit of Theorem \ref{thm:Simon}, and the proof will combine the arguments used by the respective papers.
For the sake of clarity, we will, however, be slightly more explicit with the open half-plane containing all $\alpha_n$, as compared to Andersen's and Kristensen's proof of Theorem \ref{thm:Simon}.
For this purpose, I introduce notation $\Re_\zeta(z)$ to denote $\Re(\overline{\zeta} z)$ for $\zeta\in\C\setminus\{0\}$ and $z\in\C$, as $\Re_\zeta(z)>0$ is then equivalent to $z$ lying in the open half-plane with 0 on its border and moving in direction of $\zeta$.
\begin{thm}
	\label{conj problem 1}
	Let $D,K\in\N$, let $A_1,A_2,a,\varepsilon>0$ be real numbers such that $a<1\leq A_1<A_2$, and let $\zeta\in\C$ with $|\zeta|=1$.
	Let $\{d_n\}_{n\in\N}$ be a sequence of natural numbers, and write $D_n = \prod_{i=1}^{n}d_i$.
	For $i=1,\ldots,K$, let $\{b_{i,n}\}_{n\in\N}$ be a sequence of natural numbers, and let $\{\alpha_{i,n}\}_{n\in\N}$ be sequences of algebraic integers such that 
	\begin{align}
		\label{eq:increase}
		\forall n\in\N: 
		\ &|\alpha_{1,n}| < |\alpha_{1,n+1}|
		\\ \label{eq:a1n large}
		\forall n\in\N: 
		\ &|\alpha_{1,n}|\geq n^{1+\varepsilon}
		\\ \label{eq:deg bound}
		\forall n\in\N:
		\ &\left[\Q\left(\alpha_{1,n},\ldots, \alpha_{K,n}\right):\Q\right] \leq d_n
		\\ \label{eq:A1}
		&\liminf_{n\to\infty} |\alpha_{1,n}|^{ \frac{1}{D^n\prod_{i=1}^{n-1} (KD_i + d_i)} } = A_1
		\\ \label{eq:A2}
		&\limsup_{n\to\infty} |\alpha_{1,n}|^{ \frac{1}{D^n\prod_{i=1}^{n-1} (KD_i + d_i)} } = A_2 
		\\ \label{eq:house ain bound}
		\forall n\in\N\ \forall 1< i\leq K: 
		\ &2^{-(\log_2 |\alpha_{1,n}|)^a} < \frac{|\alpha_{1,n}|}{|\alpha_{i,n}|} < 2^{(\log_2 |\alpha_{1,n}|)^a} 
		\\ \label{eq:house}
		\forall n\in\N\ \forall 1\leq i\leq K: 
		\ & {b_{i,n}} \house{\alpha_{i,n}} \leq 2^{(\log_2 |\alpha_{1,n}|)^a} |\alpha_{i,n}|
		\\ \label{eq:Rezeta}
		\forall n\in\N\ \forall 1\leq i\leq K: 
		\ &\Re_\zeta\left(\frac{b_{i,n}}{\alpha_{i,n}}\right) > 0,
		\\ \label{eq:increasing fractions}
		\forall 1\leq i<j\leq K: 
		\ &\lim_{n\to\infty} \left(\Re_\zeta\left(\frac{b_{i,n}}{\alpha_{i,n}}\right) /\Re_\zeta\left(\frac{b_{j,n}}{\alpha_{j,n}}\right)\right) = 0.
	\end{align}
	Then $\deg\gamma>D$ when $\gamma$ is any non-trivial linear combination over $\Q$ of the numbers $\sum_{n=1}^{\infty} \frac{b_{1,n}}{\alpha_{1,n}}$, $\ldots$, $\sum_{n=1}^{\infty} \frac{b_{K,n}}{\alpha_{K,n}}$.
\end{thm}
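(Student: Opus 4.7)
The plan is to argue by contradiction in the spirit of Andersen--Kristensen (Theorem~\ref{thm:Simon}), combining their framework for algebraic integer denominators with the multi-series weighted construction of Han\v{c}l (Theorem~\ref{thm:Hancl}). Suppose there exist $c_{1},\ldots,c_{K}\in\Q$, not all zero, such that $\gamma:=\sum_{i=1}^{K}c_{i}\sum_{n=1}^{\infty}b_{i,n}/\alpha_{i,n}$ has $\deg\gamma\leq D$, and let $P(X)\in\mathbb{Z}[X]$ be an integer multiple of its minimal polynomial so that $P(\gamma)=0$. Writing $S_{N}:=\sum_{i=1}^{K}c_{i}\sum_{n=1}^{N}b_{i,n}/\alpha_{i,n}$, $R_{N}:=\gamma-S_{N}$, and letting $j^{*}$ denote the largest index with $c_{j^{*}}\neq 0$, the objective is to construct an integer $T_{N}$ that is simultaneously nonzero (so $|T_{N}|\geq 1$) and arbitrarily small along a subsequence of $N$.

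For the small side, condition (\ref{eq:increasing fractions}) makes the tail $R_{N}$ asymptotic to $c_{j^{*}}b_{j^{*},N+1}/\alpha_{j^{*},N+1}$, and (\ref{eq:Rezeta}) ensures $\Re_{\zeta}$ of this term has a fixed sign, so $R_{N}\neq 0$ eventually. Since the finitely many Galois conjugates of $\gamma$ are separated from $\gamma$ in $\mathbb{C}$ while $S_{N}\to\gamma$, we deduce $P(S_{N})\neq 0$ for all large $N$; Taylor expansion of $P$ about $\gamma$ combined with (\ref{eq:increase}), (\ref{eq:house ain bound}), and (\ref{eq:house}) then yields
\[
0<|P(S_{N})|\leq C_{0}\,|R_{N}|\leq C_{1}\,\frac{2^{C_{2}(\log_{2}|\alpha_{1,N+1}|)^{a}}}{|\alpha_{1,N+1}|}.
\]

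For the large side, set $M_{N}:=\prod_{n=1}^{N}\prod_{i=1}^{K}\alpha_{i,n}^{D}$; then $M_{N}P(S_{N})$ is a nonzero algebraic integer in $K_{N}:=\Q(\alpha_{i,n}:i\leq K,\,n\leq N)$, a field of degree at most $D_{N}$ over $\Q$ by (\ref{eq:deg bound}), and its absolute norm $T_{N}:=\mathrm{Nm}_{K_{N}/\Q}(M_{N}P(S_{N}))$ is a nonzero rational integer. For each embedding $\sigma:K_{N}\hookrightarrow\mathbb{C}$, expanding $\sigma(M_{N})\,P(\sigma(S_{N}))$ monomial by monomial in the $b_{i,n}$ and $\sigma(\alpha_{i,n})$ cancels all denominators, because $\deg P\leq D$ and each $\alpha_{i,n}$ appears to the $D$th power in $M_{N}$; the resulting monomials then have non-negative exponent in $\sigma(\alpha_{i,n})$, so are controlled by $|\sigma(\alpha_{i,n})|\leq\house{\alpha_{i,n}}$ together with (\ref{eq:house}), giving a uniform embedding bound of shape $|\sigma(M_{N}P(S_{N}))|\leq C_{3}\prod_{n=1}^{N}\bigl(2^{(\log_{2}|\alpha_{1,n}|)^{a}}|\alpha_{1,n}|\bigr)^{KD}$.

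Multiplying the identity embedding's small factor against the house bounds over the other $\leq D_{N}-1$ embeddings gives $\log|T_{N}|\leq KD\,D_{N}\sum_{n\leq N}\log|\alpha_{1,n}|-\log|\alpha_{1,N+1}|$ up to subpolynomial error. Writing $E_{n}:=D^{n}\prod_{i=1}^{n-1}(KD_{i}+d_{i})$ for the exponent in (\ref{eq:A1})--(\ref{eq:A2}), the hypotheses are exploited on a subsequence where $|\alpha_{1,N+1}|^{1/E_{N+1}}$ approaches $A_{2}$; a careful choice of this subsequence---using that $\liminf=A_{1}<A_{2}=\limsup$ forces oscillation of $|\alpha_{1,n}|^{1/E_{n}}$, so the earlier terms can be arranged to stay near $A_{1}$---makes the $A_{1}<A_{2}$ gap drive $\log|T_{N}|\to-\infty$, contradicting $|T_{N}|\geq 1$. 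The principal obstacle is precisely this last bookkeeping: one must verify that the combinatorial exponent $E_{n}$ in the hypothesis matches the effective exponent $KD\,D_{N}$ obtained from the $K$ factors of $\alpha_{i,n}^{D}$ in $M_{N}$ multiplied over the $D_{N}$ embeddings, and extract the right subsequence so that the gap $A_{1}<A_{2}$ absorbs both the geometric-series overhead in $\sum E_{n}$ and the subpolynomial $2^{(\log_{2}|\alpha_{1,n}|)^{a}}$ corrections.
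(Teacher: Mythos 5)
Your first half — replacing the Mahler‑measure/Liouville packaging (Lemmas~\ref{LemmaAK3}, \ref{LemmaAK4}, \ref{LemmaAK6}) with a direct norm argument on $T_N=\mathrm{Nm}_{K_N/\Q}(M_NP(S_N))$ — is a genuine and workable alternative to Lemma~\ref{Prop:eq AK2}; it essentially unpacks the proof of the Liouville inequality rather than quoting it. Two small repairs are needed there: the coefficients $c_i$ are only rational, so you must clear denominators (as the paper does with $\beta_i\in\mathbb{Z}$) before $M_NP(S_N)$ is an algebraic integer; and the accounting of the exponent should be checked against the paper's $(\prod_n|\alpha_{1,n}|^K)^{DD_N}$, which it does match.

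The real issue is the concluding subsequence argument, which is where the paper spends the bulk of its effort (Lemma~\ref{Prop:eq AK2 negated}, with its two cases and the $s_0,s_1,s_2,N$ construction). Your sketch says the oscillation lets ``the earlier terms be arranged to stay near $A_1$.'' That is not something you can arrange: the values $S_n:=|\alpha_{1,n}|^{1/E_n}$ for $n\le N$ are given, and many of them will typically be close to $A_2$, not $A_1$. Indeed, if you only know $S_n\le A_2+\delta$ for $n\le N$ and $S_{N+1}\ge A_2-\delta$, then Lemma~\ref{Lemma:Exponents inequality} gives $\log|T_N|\lesssim E_{N+1}\bigl(\log(A_2+\delta)-\log(A_2-\delta)\bigr)$, which does \emph{not} tend to $-\infty$. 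The actual mechanism is subtler: fix $\delta$ with $(A_2-2\delta)^2>(A_2+\delta)(A_1+\delta)$, locate $s_1$ where $S_{s_1}$ dips below $A_1+\delta$ and a later $s_2$ where $S_{s_2}>A_2-\delta$, and then find $N\in[s_1,s_2)$ at which $S_{N+1}>(1+1/(N+1)^2)\max_{s_1\le j\le N}\{S_j,\,A_2-2\delta\}$ (such an $N$ exists, else $S_{s_2}$ would be forced below $A_2-\delta$). The exponential decay then comes from $(1+1/(N+1)^2)^{E_{N+1}}$, while the $A_1<A_2$ gap serves only to make the correction product over $s_0\le n\le s_1$ at least $1$. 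You are also missing the case split on whether $a_n\ge2^n$ eventually, which the paper needs to get a usable tail estimate on $\gamma(N)$ (your $|R_N|\lesssim 2^{C(\log|\alpha_{1,N+1}|)^a}/|\alpha_{1,N+1}|$ is only the contribution of the first tail term; one must control the whole tail, and the estimate differs depending on whether $a_n$ grows at least geometrically). Until those pieces are supplied, the proposal does not close the argument.
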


\section{Auxiliary Results}
The proof of Theorem \ref{conj problem 1} will be split into two parts, the first of which will be based around the Weil height and the Mahler measure.
We recall the definitions below.

For $K$ being some finite field extension of $\Q$ of degree $d$, we define for $\alpha\in K$ the Weil height of $\alpha$ as the number 
\begin{align*}
	H(\alpha) := \prod_{\nu\in M_K} \max\{1,|\alpha|_\nu\}^{d_\nu/d},
\end{align*}
where $M_k$ denotes the set of places of $K$, and $d_\nu=[K_\nu:\Q_\nu]$ denotes the degree of the completion of $K$ with respect to place $\nu$ as an extension of the completion of $\Q$ with respect to $\nu$.
With the normalisation in the exponent $d_\nu/d$, the definition is independent of the field $K$ containing $\alpha$.
We define the Mahler measure of $\alpha$ as
\begin{align*}
	M(\alpha) := |a_d|\prod_{i=1}^{n} \max\{1,|\alpha_i|\},
\end{align*}
where $a_d$ here denotes leading coefficient of the minimal polynomial in $\mathbb{Z}[X]$ of $\alpha$, and $\alpha_1,\ldots,\alpha_d$ denote the conjugates of $\alpha$.

The proof will furthermore use the following lemmas, the first of which relates Weil height, Mahler measure, and house of algebraic integers.
The main part of the statement, $H(\alpha)=M(\alpha)^{1/d}$, is a classical result, which is presented in \cite{Waldshcmidt}.
The rest is essentially a trivial consideration, see \cite{Simon+SimonReciprocalAlgInts}.
\begin{lem}\label{LemmaAK3}
	Let $\alpha$ be an algebraic number of degree $d$.
	Then
	\begin{align*}
		H(\alpha) = M(\alpha)^{1/d} \leq \house{\alpha} \leq M(\alpha) = H(\alpha)^d
	\end{align*}
\end{lem}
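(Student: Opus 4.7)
The identity $H(\alpha) = M(\alpha)^{1/d}$ is the classical part of this lemma and, as the excerpt notes, is already cited to \cite{Waldshcmidt}; my plan is to reproduce its standard derivation. Take $K = \Q(\alpha)$, a field of degree $d$ over $\Q$, and split the product $\prod_{\nu\in M_K} \max\{1,|\alpha|_\nu\}^{d_\nu/d}$ into its archimedean and non-archimedean contributions. For the archimedean part, I would identify the places $\nu\mid\infty$ with the Galois conjugates $\alpha_1,\dots,\alpha_d$ by running through the embeddings $K\hookrightarrow\C$ paired under complex conjugation, with the pairing absorbed into the factor $d_\nu\in\{1,2\}$; this produces $\prod_{i=1}^d \max\{1,|\alpha_i|\}^{1/d}$. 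For the non-archimedean part, the standard identity $\prod_{\nu\nmid\infty} \max\{1,|\alpha|_\nu\}^{d_\nu/d} = |a_d|^{1/d}$ comes from the fact that the denominator of the fractional ideal generated by $\alpha$ is controlled by the leading coefficient of the minimal polynomial (a Gauss-lemma style computation). Multiplying the two parts gives $M(\alpha)^{1/d}$, and the outer equality $M(\alpha)=H(\alpha)^d$ then follows by raising to the $d$-th power.

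For the two inequalities, my plan is to work directly from the closed form $M(\alpha) = |a_d|\prod_{i=1}^d \max\{1,|\alpha_i|\}$. In the paper's context $\alpha$ is an algebraic integer, so $|a_d|=1$; moreover, for $\alpha\neq 0$ the product of conjugate moduli equals $|a_0|\geq 1$, forcing at least one $|\alpha_i|\geq 1$ and hence $\house{\alpha}\geq 1$. Choosing $j$ with $|\alpha_j|=\house{\alpha}$, I would then estimate $M(\alpha)\geq \max\{1,|\alpha_j|\}=\house{\alpha}$, giving the right-hand inequality. For the left-hand inequality, each factor of the Mahler product satisfies $\max\{1,|\alpha_i|\}\leq\house{\alpha}$, so $M(\alpha)\leq\house{\alpha}^d$, and the $d$-th root yields $M(\alpha)^{1/d}\leq\house{\alpha}$.

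I do not expect any real obstacle: both inequalities are one-line observations once the formula for $M(\alpha)$ is written down, and $H(\alpha)=M(\alpha)^{1/d}$ is cited. The only conceptual step that is easy to miss is the standard collapse of the non-archimedean part of the Weil height to $|a_d|^{1/d}$, which is precisely the bridge between the place-theoretic expression for $H(\alpha)$ and the conjugate-theoretic expression for $M(\alpha)$.
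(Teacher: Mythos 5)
Your proposal is correct and follows the same route as the paper, which simply cites \cite{Waldshcmidt} for the equality $H(\alpha)=M(\alpha)^{1/d}$ and dismisses the two inequalities as immediate from the closed-form expression for $M(\alpha)$; you fill in those short observations in the expected way. One thing worth flagging, and which you tacitly handled correctly by assuming $|a_d|=1$: the inequality $M(\alpha)^{1/d}\leq\house{\alpha}$ genuinely requires $\alpha$ to be a nonzero algebraic \emph{integer}, since for instance $\alpha=1/2$ has $M(\alpha)=2$ but $\house{\alpha}=1/2$; so the hypothesis ``algebraic number'' in the statement is a slight imprecision (the lemma is only ever applied to algebraic integers in the paper), and the restriction you impose is not an extra convenience but exactly what makes the claim true.
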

The second lemma is a list of further classical results regarding the Weil height, see \cite{Waldshcmidt}.
\begin{lem}
\label{LemmaAK4}
	Let $\alpha, \beta$ be algebraic numbers.
	Then
	\begin{gather*}
		H\left(1/\alpha\right) = H(\alpha) \text{ if } \alpha\neq 0,
		\quad
		H(\alpha+\beta)\leq 2H(\alpha)H(\beta),
		\\
		H(\alpha\beta)\leq H(\alpha)H(\beta)
	\end{gather*}
\end{lem}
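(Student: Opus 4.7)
I argue by contradiction. Suppose $\gamma = \sum_{i=1}^K c_i S_i$ (with $S_i = \sum_{n \geq 1} b_{i,n}/\alpha_{i,n}$ and not all $c_i$ zero) has degree $\leq D$, with primitive integer minimal polynomial $P(X)$ of degree $e \leq D$. Set $\gamma_N = \sum_{i=1}^K c_i \sum_{n=1}^N b_{i,n}/\alpha_{i,n}$; by (3), the field $F_N = \Q(\alpha_{i,n}: i \leq K, n \leq N)$ containing $\gamma_N$ has degree at most $D_N$ over $\Q$. First, conditions (9), (10) and (1) together force the tail $\gamma - \gamma_N$ to be dominated by its leading summand $c_{i^\ast}b_{i^\ast,N+1}/\alpha_{i^\ast,N+1}$, with $i^\ast$ the largest index having $c_{i^\ast} \neq 0$: smaller $i$ are suppressed by (10), later $n$ by (1). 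In particular $\Re_\zeta(\gamma - \gamma_N) \neq 0$ eventually, and since $\gamma_N \to \gamma$ lies closer to $\gamma$ than to any other conjugate, $P(\gamma_N) \neq 0$ for $N$ large. Meanwhile (6) and (7) yield $b_{i,n}/|\alpha_{i,n}| \leq 2^{2(\log_2|\alpha_{1,n}|)^a}/|\alpha_{1,n}|$, whence $|\gamma-\gamma_N| \leq C \cdot 2^{O((\log_2|\alpha_{1,N+1}|)^a)}/|\alpha_{1,N+1}|$ and hence $|P(\gamma_N)| \leq C_\gamma|\gamma-\gamma_N|$ by the mean value theorem.

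For the Liouville-type lower bound on $|P(\gamma_N)|$, let $L$ be a common denominator of $c_1,\ldots,c_K$ and $\Pi_N = \prod_{i,n \leq N}\alpha_{i,n}$. Then $L\Pi_N\gamma_N$ is an algebraic integer (rewrite it as $\sum_{i,n} Lc_ib_{i,n}\prod_{(i',n')\neq(i,n)}\alpha_{i',n'}$) and so is
\[
\omega_N := L^D\Pi_N^D P(\gamma_N) = \sum_{j=0}^{D} a_j L^{D-j}\Pi_N^{D-j}(L\Pi_N\gamma_N)^j \in F_N,
\]
of degree at most $D_N$ and nonzero by the above. Using Lemmas \ref{LemmaAK3}--\ref{LemmaAK4} with (6) and (7), I get $\house{\alpha_{i,n}} \leq 2^{2(\log_2|\alpha_{1,n}|)^a}|\alpha_{1,n}|$, so both $\house{\Pi_N}$ and $\house{L\Pi_N\gamma_N}$ are bounded by $2^{o(\log|\alpha_{1,N}|)}\prod_{n\leq N}|\alpha_{1,n}|^K$; the bound for the latter uses that $L\Pi_N\gamma_N$ is a sum of at most $KN$ algebraic integers, each a product of the $\alpha_{i',n'}$'s with a small scalar. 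Consequently $\house{\omega_N} \leq 2^{o(\log|\alpha_{1,N}|)}\prod_{n\leq N}|\alpha_{1,n}|^{KD}$. The integrality inequality $|\omega_N| \geq \house{\omega_N}^{-(D_N-1)}$ combined with $|\omega_N| \leq L^D|\Pi_N|^D C_\gamma|\gamma-\gamma_N|$ and the tail estimate yields an inequality of shape
\[
|\alpha_{1,N+1}|^{1-o(1)} \leq \prod_{n\leq N}|\alpha_{1,n}|^{KDD_N(1+o(1))}.
\]

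Writing $\phi(n) = D^n\prod_{i=1}^{n-1}(KD_i+d_i)$, so that $\phi(N+1) = D(KD_N+d_N)\phi(N)$, I take logarithms and divide by $\phi(N+1)$. Conditions (4) and (5) give $\log|\alpha_{1,n}|/\phi(n) \in [\log A_1-o(1),\log A_2+o(1)]$ for $n$ large, with $\limsup$ and $\liminf$ equal to $\log A_2$ and $\log A_1$ respectively. Using the geometric-tail bound on $\sum_{n\leq N}\phi(n)/\phi(N)$ (valid because $D(K+1)\geq 2$), the right-hand side rewrites as a weighted combination of the $\log|\alpha_{1,n}|/\phi(n)$ with weights concentrated on $n$ close to $N$, and the gap $A_1 < A_2$ forces a contradiction along the subsequence realising $\limsup$. \textbf{The main obstacle} is calibrating this final weighted inequality: the naive bound $KDD_N$ must be sharpened so the induced contraction on $\log|\alpha_{1,N+1}|/\phi(N+1)$ is genuine relative to $\max_n \log|\alpha_{1,n}|/\phi(n)$. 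This relies crucially on $L\Pi_N\gamma_N$ being an algebraic integer of house comparable to $\house{\Pi_N}$ itself---absorbing the otherwise catastrophic conjugate blow-up $1/|\sigma(\alpha_{i,n})|$ of $\sigma(\gamma_N)$ via (6) and (7)---and on $a < 1$ rendering the subexponential factors $2^{O((\log|\alpha_{1,n}|)^a)}$ negligible against $\phi(N+1)$.
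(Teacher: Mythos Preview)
Your proposal does not address the stated lemma at all. Lemma~\ref{LemmaAK4} is the elementary list of Weil-height identities $H(1/\alpha)=H(\alpha)$, $H(\alpha+\beta)\le 2H(\alpha)H(\beta)$, $H(\alpha\beta)\le H(\alpha)H(\beta)$; the paper does not prove it but simply cites Waldschmidt. What you have written is instead a proof sketch for the \emph{main theorem} (Theorem~\ref{conj problem 1}): you assume $\deg\gamma\le D$, set up the partial sums $\gamma_N$, bound the tail, derive a Liouville-type lower bound on $|P(\gamma_N)|$, and aim for a contradiction via the growth conditions \eqref{eq:A1}--\eqref{eq:A2}. You even invoke Lemma~\ref{LemmaAK4} as a tool in your argument, which makes clear you are not trying to establish it.

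If the intent was to prove Lemma~\ref{LemmaAK4}, the correct argument is the standard place-by-place verification: for $H(\alpha\beta)$ and $H(1/\alpha)$ use $|\alpha\beta|_\nu\le|\alpha|_\nu|\beta|_\nu$ and the product formula, respectively; for $H(\alpha+\beta)$ use $|\alpha+\beta|_\nu\le c_\nu\max(|\alpha|_\nu,|\beta|_\nu)$ with $c_\nu=2$ at archimedean places and $c_\nu=1$ otherwise, together with $\sum_\nu d_\nu/d=1$ over the archimedean places. None of this appears in your write-up.

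If instead your intent was to sketch the main theorem, note that the paper's argument differs structurally from yours: rather than clearing denominators and bounding the house of $L^D\Pi_N^D P(\gamma_N)$ directly, the paper bounds $M(\gamma_N)$ via Lemmas~\ref{LemmaAK3}--\ref{LemmaAK4} and then applies the separation Lemma~\ref{LemmaAK6} to $|\gamma-\gamma_N|$. The contradiction is then obtained not by a single limiting inequality but by a careful case split (Lemma~\ref{Prop:eq AK2 negated}) in which one selects specific indices $N$ along which the growth of $|\alpha_{1,N+1}|$ jumps relative to its predecessors; your ``weighted combination concentrated near $N$'' heuristic glosses over precisely the step you yourself flag as the main obstacle, and as stated it does not yield the contradiction without that index selection.
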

Similar results are likewise true for the degree function, as seen by the below lemma.
\begin{lem}
\label{LemmaAK5}
	Let $\alpha, \beta$ be algebraic numbers.
	Then
	\begin{gather*}
		\deg(1/\alpha) = \deg(\alpha)\text{ if } \alpha\neq 0,
		\\
		\deg(\alpha+\beta)\leq \deg(\alpha) \deg(\beta),
		\quad \deg(\alpha\beta) \leq \deg(\alpha) \deg(\beta)
	\end{gather*}
\end{lem}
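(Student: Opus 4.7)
The plan is to deduce all three statements directly from the tower law for field extensions together with the trivial fact that $\Q(\gamma)\subseteq L$ implies $[\Q(\gamma):\Q]\leq [L:\Q]$.

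First I would handle $\deg(1/\alpha)$. Since $\alpha\neq 0$ is algebraic, $1/\alpha\in\Q(\alpha)$, so $\Q(1/\alpha)\subseteq\Q(\alpha)$; and conversely $\alpha = 1/(1/\alpha)\in\Q(1/\alpha)$, so $\Q(\alpha)\subseteq\Q(1/\alpha)$. Hence $\Q(1/\alpha)=\Q(\alpha)$, and the two extensions have the same degree over $\Q$.

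For the additive and multiplicative bounds, the key observation is that $\alpha+\beta$ and $\alpha\beta$ both lie in $\Q(\alpha,\beta)$. By the tower law,
\begin{equation*}
[\Q(\alpha,\beta):\Q] = [\Q(\alpha,\beta):\Q(\beta)]\cdot[\Q(\beta):\Q].
\end{equation*}
The minimal polynomial of $\alpha$ over $\Q$ is a polynomial over $\Q(\beta)$ that annihilates $\alpha$, so the minimal polynomial of $\alpha$ over $\Q(\beta)$ divides it and hence has degree at most $\deg(\alpha)$; this gives $[\Q(\alpha,\beta):\Q(\beta)]\leq\deg(\alpha)$. Multiplying by $[\Q(\beta):\Q]=\deg(\beta)$ yields $[\Q(\alpha,\beta):\Q]\leq\deg(\alpha)\deg(\beta)$. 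Since $\Q(\alpha+\beta)$ and $\Q(\alpha\beta)$ are both subfields of $\Q(\alpha,\beta)$, the inclusion of fields gives
\begin{equation*}
\deg(\alpha+\beta)=[\Q(\alpha+\beta):\Q]\leq [\Q(\alpha,\beta):\Q]\leq \deg(\alpha)\deg(\beta),
\end{equation*}
and likewise for $\deg(\alpha\beta)$.

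No step is really an obstacle here; the result is a standard consequence of the tower law, and the only care needed is to observe that $\alpha$ remains algebraic over $\Q(\beta)$ with degree no larger than its degree over $\Q$. The mild subtlety if $\alpha$ or $\beta$ happens to be $0$ (in which case $\alpha+\beta$ or $\alpha\beta$ trivially has degree $1$) is absorbed by the convention that the degree of an algebraic number is at least $1$, so the bound holds automatically.
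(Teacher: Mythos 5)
Your proof is correct and follows essentially the same route as the paper: the inequalities come from $\alpha+\beta,\alpha\beta\in\Q(\alpha,\beta)$ together with $[\Q(\alpha,\beta):\Q]\leq\deg(\alpha)\deg(\beta)$, and the equality from $\Q(1/\alpha)=\Q(\alpha)$. You merely spell out, via the tower law and the minimal polynomial of $\alpha$ over $\Q(\beta)$, the step the paper dismisses as ``clear,'' which is a fine (and harmless) elaboration.
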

This is essentially trivial:
Following the spirit of \cite{Isaacs}, the inequalities come from noting that $\alpha+\beta$ and $\alpha\beta$ both lie in the field extension $\Q(\alpha, \beta)$, which clearly has degree at most $\deg (\alpha) \deg (\beta)$ over $\Q$.
Noting $1/\alpha \in\Q(\alpha)$ and $\alpha\in\Q(1/\alpha)$ for $\alpha\neq 0$, it is likewise obvious that $\deg(1/\alpha)=\deg\alpha$.

The below lemma is central for the first part of the proof of Theorem \ref{conj problem 1}, and seems to originally be from \cite{Liouville}.
A proof may also be extracted from the proof of Theorem A.1 in Appendix A of \cite{Bugeaud}.
\begin{lem}
	\label{LemmaAK6}
	Let $\alpha, \beta$ be non-conjugate algebraic numbers.
	Then
	\begin{align*}
		|\alpha - \beta| \geq \frac{1}{2^{\deg(\alpha)\deg(\beta)} M(\alpha)^{\deg(\beta)} M(\beta)^{\deg(\alpha)}}
	\end{align*}
\end{lem}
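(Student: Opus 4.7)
The plan is to exploit the resultant of the minimal polynomials of $\alpha$ and $\beta$ as the key non-zero integer quantity. Set $m=\deg(\alpha)$, $n=\deg(\beta)$, and write the minimal polynomials as $P(X)=a_m\prod_{i=1}^{m}(X-\alpha_i)$ and $Q(X)=b_n\prod_{j=1}^{n}(X-\beta_j)$, with $\alpha_1=\alpha$, $\beta_1=\beta$, and leading coefficients $a_m,b_n\in\mathbb{Z}\setminus\{0\}$. Since $\alpha$ and $\beta$ are non-conjugate, $P$ and $Q$ are distinct (up to sign) irreducible polynomials in $\mathbb{Z}[X]$ and hence share no common complex root, so their resultant
\[
R=\operatorname{Res}(P,Q)=a_m^{\,n}\,b_n^{\,m}\prod_{i=1}^{m}\prod_{j=1}^{n}(\alpha_i-\beta_j)
\]
is a non-zero integer, giving $|R|\geq 1$.

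Next I would isolate the factor $(\alpha_1-\beta_1)=\alpha-\beta$ and move every other $|\alpha_i-\beta_j|$ to the denominator, which yields
\[
|\alpha-\beta|\;\geq\;\frac{1}{|a_m|^{n}|b_n|^{m}\prod_{(i,j)\neq(1,1)}|\alpha_i-\beta_j|}.
\]
Each remaining factor is bounded above by the elementary inequality $|\alpha_i-\beta_j|\leq 2\max\{1,|\alpha_i|\}\max\{1,|\beta_j|\}$, easily verified by a case split on whether each modulus exceeds $1$. Multiplying this bound over the $mn-1$ surviving pairs and using that $\prod_{i=1}^{m}\max\{1,|\alpha_i|\}=M(\alpha)/|a_m|$ and its analogue for $\beta$, the denominator is at most
\[
2^{mn-1}\cdot\frac{M(\alpha)^{n}M(\beta)^{m}}{|a_m|^{n}|b_n|^{m}\,\max\{1,|\alpha|\}\max\{1,|\beta|\}}.
\]
Substituting back and observing $\max\{1,|\alpha|\}\max\{1,|\beta|\}\geq 1$ collapses everything to $|\alpha-\beta|\geq 1/\bigl(2^{mn-1}M(\alpha)^{n}M(\beta)^{m}\bigr)$, which is in fact slightly stronger than the stated bound with $2^{mn}$.

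I expect essentially no real obstacle; the work is routine once the resultant is invoked. The one point that must be argued carefully is that $R\in\mathbb{Z}\setminus\{0\}$, which requires (i) the standard formula for the resultant in terms of leading coefficients and roots, (ii) the irreducibility together with non-proportionality of $P$ and $Q$ to rule out a common root, and (iii) the observation that $R$ is a polynomial in the integer coefficients of $P$ and $Q$. With these in hand, the displayed lower bound on $|\alpha-\beta|$ follows immediately from the algebraic manipulation above, proving the lemma.
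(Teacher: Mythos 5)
Your resultant argument is correct and in fact gives the slightly stronger constant $2^{mn-1}$: the key points (non-conjugacy makes the minimal polynomials distinct irreducibles with no common root, hence a non-zero integer resultant; the bound $|\alpha_i-\beta_j|\leq 2\max\{1,|\alpha_i|\}\max\{1,|\beta_j|\}$; and the identity $\prod_i\max\{1,|\alpha_i|\}=M(\alpha)/|a_m|$) all check out, including the degenerate cases where $\alpha$ or $\beta$ is rational. The paper does not prove this lemma itself but cites Liouville and Bugeaud (Theorem A.1), whose proof is precisely this resultant-based Liouville-type estimate, so your route is essentially the same as the one the paper relies on.
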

In the second part of the proof of Theorem \ref{conj problem 1}, we will occasionally need the below simple estimate related to the exponent of the limes superior and limes inferior.
\begin{lem}\label{Lemma:Exponents inequality}
	Let $D,K,N\in\N$ be natural numbers, and let $\{d_n\}_{n\in\N}$ be a sequence of natural numbers.
	Writing $D_n=\prod_{i=1}^{n} d_i$, we have
	\begin{align*}
		D^{N+1}\prod_{i=1}^{N}(K D_i+d_i) \geq KD D_N \sum_{n=1}^{N} D^n\prod_{i=1}^{n-1}(KD_i+d_i).
	\end{align*}
\end{lem}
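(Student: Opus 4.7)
The plan is to strip away the double-indexed notation by setting $a_n := D^n \prod_{i=1}^{n-1}(KD_i+d_i)$ so that the left-hand side equals $D\cdot a_{N+1}$ and the right-hand side equals $KDD_N\sum_{n=1}^N a_n$. After cancelling a factor of $D$, the claim reduces to showing
\begin{equation*}
    a_{N+1}\geq KD_N\sum_{n=1}^N a_n
\end{equation*}
for every $N\in\N$. Since $a_{n+1}/a_n = D(KD_n+d_n)$, this has the flavour of a series whose terms grow at least geometrically, which suggests induction on $N$.

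The key algebraic identity that will drive the induction is $KD_N + d_N = d_N(KD_{N-1}+1)$, which is immediate from $D_N = d_N D_{N-1}$. Combined with the recursion above it gives
\begin{equation*}
    a_{N+1} = D\, d_N (KD_{N-1}+1)\, a_N.
\end{equation*}

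For the base case $N=1$ I would simply compute both sides directly: $a_2 = D^2(KD_1+d_1) \geq KD_1\cdot D = KD_1 a_1$. For the inductive step, assuming $a_N \geq KD_{N-1}\sum_{n=1}^{N-1}a_n$, I would bound
\begin{equation*}
    \sum_{n=1}^N a_n \;\leq\; a_N + \frac{a_N}{KD_{N-1}} \;=\; \frac{KD_{N-1}+1}{KD_{N-1}}\, a_N,
\end{equation*}
multiply through by $KD_N$, and use $KD_N/(KD_{N-1}) = d_N$ to obtain $KD_N\sum_{n=1}^N a_n \leq d_N(KD_{N-1}+1)\, a_N$. The recursion above then yields $a_{N+1} = D\cdot d_N(KD_{N-1}+1)\, a_N \geq d_N(KD_{N-1}+1)\, a_N$, which closes the induction.

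I do not anticipate a real obstacle here; the statement is purely combinatorial and reduces, once the right substitution is made, to a two-line induction. The only content is spotting the factorisation $KD_N+d_N = d_N(KD_{N-1}+1)$ so that the factor $KD_N$ coming from the sum-bound matches the factor $D(KD_N+d_N)$ coming from the recursion for $a_{N+1}$, with the extra factor of $D\geq 1$ absorbing the $+1$ that accumulates from including the $N$-th term of the sum.
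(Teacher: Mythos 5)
Your proof is correct and is essentially the paper's proof: both proceed by induction on $N$, peeling off the last factor $D(KD_N+d_N)$, applying the inductive hypothesis to absorb the sum, and using $D\geq 1$ to discard an extra factor of $D$. Your extra step of cancelling a factor of $D$ up front and the factorisation $KD_N+d_N = d_N(KD_{N-1}+1)$ make the bookkeeping a bit more transparent, but the underlying argument is the same.
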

\begin{proof}
	The first statement proven by induction in $N$.
	Note that the statement clearly holds for $N=1$.
	Suppose it holds for $N-1$, for some $N>1$.
	Then
	\begin{align*}
		D^{N+1}\prod_{i=1}^{N}(KD_{i}+d_i) =\; &
		D(KD_{N}+d_N) \left(D^{N}\prod_{i=1}^{N-1}(KD_{i}+d_i)\right) 
		\\
		\geq\;&
		KDD_{N} \left( D^{N}\prod_{i=1}^{N-1}(KD_{i}+d_i) \right)
		\\& 
		+ d_N \left( KDD_{N-1} \sum_{n=1}^{N-1} D^n \prod_{i=1}^{n-1} (KD_{i}+d_i) \right)
		\\
		=\;& 
		KDD_{N}\sum_{n=1}^{N} D^n \prod_{i=1}^{n-1} (KD_{i}+d_i).
	\end{align*}
\end{proof}
Near the end of the proof, we will use a generalised version of a lemma from \cite{ErdosIrrOfInfSeries}, which Erd\H{o}s used for proving Theorem \ref{thm:Erdos}.
The current version is presented and proven in \cite{Simon+SimonReciprocalAlgInts}.
\begin{lem}
	\label{LemmaAK7}
	Let $\varepsilon>0$, and let $\{a_n\}_{n=1}^\infty$ be an increasing sequence of real numbers satisfying $a_n>n^{1+\varepsilon}$ for all $n\in\N$.
	Then
	\begin{align*}
		\sum_{n=N} \frac{1}{a_n} < \frac{2+ 1/\varepsilon}{a_N^{\varepsilon/(1+\varepsilon)}}.
	\end{align*}
\end{lem}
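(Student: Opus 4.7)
The plan is to split the infinite tail at a cutoff $M$ chosen so that the two natural bounds available---$a_n \ge a_N$ for $N \le n \le M$ (from monotonicity) and $a_n > n^{1+\varepsilon}$ for $n > M$ (from the hypothesis)---contribute comparable amounts. Setting the resulting estimates $M/a_N$ and $M^{-\varepsilon}/\varepsilon$ roughly equal suggests $M \approx a_N^{1/(1+\varepsilon)}$, so I would take $M := \lceil a_N^{1/(1+\varepsilon)} \rceil$. Since $a_N > N^{1+\varepsilon}$, we obtain $a_N^{1/(1+\varepsilon)} > N$ and hence $M \ge N$, so the split $\sum_{n=N}^{M} + \sum_{n=M+1}^{\infty}$ is well-defined.

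For the initial block, monotonicity bounds each of the at most $M$ terms by $1/a_N$, giving
\[
\sum_{n=N}^{M} \frac{1}{a_n} \;\le\; \frac{M}{a_N} \;\le\; \frac{a_N^{1/(1+\varepsilon)} + 1}{a_N} \;\le\; 2\,a_N^{-\varepsilon/(1+\varepsilon)},
\]
where the final step absorbs the extra $1/a_N$ into $a_N^{-\varepsilon/(1+\varepsilon)}$ using $a_N \ge 1$ (which follows from $a_N > N^{1+\varepsilon} \ge 1$). For the far tail, replacing each $a_n$ by the smaller quantity $n^{1+\varepsilon}$ and then comparing to an integral gives
\[
\sum_{n=M+1}^{\infty} \frac{1}{a_n} \;<\; \sum_{n=M+1}^{\infty} \frac{1}{n^{1+\varepsilon}} \;\le\; \int_{M}^{\infty} x^{-(1+\varepsilon)}\,dx \;=\; \frac{M^{-\varepsilon}}{\varepsilon} \;\le\; \frac{a_N^{-\varepsilon/(1+\varepsilon)}}{\varepsilon},
\]
and the first inequality is strict because the hypothesis $a_n > n^{1+\varepsilon}$ is strict.

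Adding the two estimates yields the claimed bound $(2 + 1/\varepsilon)\,a_N^{-\varepsilon/(1+\varepsilon)}$, with the overall strict inequality inherited from the tail estimate. There is no real obstacle here; the only mildly delicate point---and essentially the only choice made in the argument---is the cutoff $M = \lceil a_N^{1/(1+\varepsilon)} \rceil$, which is forced by requiring the exponent $\varepsilon/(1+\varepsilon)$ to match on both pieces of the split.
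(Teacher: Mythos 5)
Your proof is correct, and it follows essentially the same splitting argument (cut at about $a_N^{1/(1+\varepsilon)}$ terms, bound the initial block by monotonicity and the tail by comparison with $\sum n^{-(1+\varepsilon)}$) as the proof the paper defers to in the cited work of Andersen and Kristensen, which is where the constant $2+1/\varepsilon$ comes from. No gaps; the cutoff choice, the absorption of $1/a_N$ using $a_N\geq 1$, and the strictness via $a_n>n^{1+\varepsilon}$ are all handled correctly.
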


\section{Proof of Main Result}
The proof of Theorem \ref{conj problem 1} will be split into two lemmas:

\begin{lem} \label{Prop:eq AK2}
	Let $D,K\in\N$, let $\zeta\in\C$ with $|\zeta|=1$, and let $a, c,A_2>0$ such that $c<a<1<A_2$.
	Let $\{d_n\}_{n\in\N}$ be a sequence of natural numbers, and write $D_n = \prod_{i=1}^{n}d_i$.
	For $i=1,\ldots,K$, let $\{\alpha_{i,n}\}_{n\in\N}$ be a sequence of algebraic integers, and $\{b_{i,n}\}_{n\in\N}$ be a sequence of natural numbers.
	Suppose that equations \eqref{eq:deg bound},
	\eqref{eq:A2}, \eqref{eq:house ain bound}, \eqref{eq:house}, \eqref{eq:Rezeta}, \eqref{eq:increasing fractions} are satisfied, let $\beta_1,\ldots,\beta_K\in\mathbb{Z}$ be integers that are not all 0, and write
	\begin{align*}
		\gamma = \sum_{j=1}^{K}\beta_j\sum_{n=1}^{\infty}\frac{b_{j,n}}{\alpha_{j,n}},
		\qquad \gamma(N) = \sum_{j=1}^{K}\beta_j\sum_{n=N+1}^{\infty}\frac{b_{j,n}}{\alpha_{j,n}}
	\end{align*}
	If $\deg \gamma\leq D$ and $c\in(a,1)$, then
	\begin{align*}
		|\gamma(N)|\left(2^{D^{cN} \prod_{i=1}^{N-1}(KD_{i}+d_i)^{c}} \prod_{n=1}^{N} |\alpha_{1,n}|^{K} \right)^{D D_{N}} \geq 1
	\end{align*}
	holds for all sufficiently large $N$.
\end{lem}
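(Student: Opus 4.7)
The plan is to apply Liouville's inequality (Lemma \ref{LemmaAK6}) to the difference $\gamma - S_N$, where
\begin{align*}
	S_N := \sum_{j=1}^{K}\beta_j\sum_{n=1}^{N}\frac{b_{j,n}}{\alpha_{j,n}},
\end{align*}
so that $\gamma(N) = \gamma - S_N$. Assuming $\gamma$ and $S_N$ are non-conjugate for $N$ large, Lemma \ref{LemmaAK6} yields
\begin{align*}
	|\gamma(N)| \;\geq\; \frac{1}{2^{\deg\gamma\,\deg S_N}\,M(\gamma)^{\deg S_N}\,M(S_N)^{\deg\gamma}}.
\end{align*}
Since $\gamma$ is fixed (once $\beta_1,\ldots,\beta_K$ are chosen and $\deg\gamma\leq D$ is imposed), $M(\gamma)$ is a constant, so the task reduces to bounding $\deg S_N$ and $M(S_N)$ from above.

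First I would bound $\deg S_N$. By \eqref{eq:deg bound}, the numbers $\alpha_{j,n}$ with $1\leq j\leq K$, $1\leq n\leq N$, all lie in the compositum of the fields $\Q(\alpha_{1,n},\ldots,\alpha_{K,n})$ over $n=1,\ldots,N$, which has degree at most $D_N=\prod_{n=1}^N d_n$; hence $\deg S_N \leq D_N$ by Lemma \ref{LemmaAK5}. Next I would iterate Lemma \ref{LemmaAK4} to get $H(S_N) \leq 2^{KN}\prod_{n,j} H(\beta_j b_{j,n}/\alpha_{j,n})$, bounding each factor by $|\beta_j|\, b_{j,n}\, \house{\alpha_{j,n}}$ via $H(1/\alpha)=H(\alpha)\leq \house{\alpha}$ (Lemmas \ref{LemmaAK3} and \ref{LemmaAK4}). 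Hypotheses \eqref{eq:house} and \eqref{eq:house ain bound} then improve this to $|\beta_j|\cdot 2^{2(\log_2|\alpha_{1,n}|)^a}|\alpha_{1,n}|$, and Lemma \ref{LemmaAK3} gives $M(S_N) \leq H(S_N)^{D_N}$.

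Substituting into the Liouville bound and using $\deg\gamma\leq D$, it suffices to check
\begin{align*}
	H(S_N)\cdot 2\cdot M(\gamma)^{1/D}\; \leq\; 2^{D^{cN}\prod_{i=1}^{N-1}(KD_i+d_i)^c}\prod_{n=1}^N|\alpha_{1,n}|^K
\end{align*}
for all large $N$. The factor $\prod_n|\alpha_{1,n}|^K$ on the right matches the corresponding factor in the upper bound for $H(S_N)$ exactly, so I need only absorb $2^{O(N) + 2K\sum_{n=1}^N(\log_2|\alpha_{1,n}|)^a}$ into $2^{D^{cN}\prod_{i<N}(KD_i+d_i)^c}$. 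By \eqref{eq:A2}, $\log_2|\alpha_{1,n}| \leq \log_2(A_2+1)\cdot D^n\prod_{i<n}(KD_i+d_i)$ for $n$ large, so the last sum is bounded (up to a constant) by its final term $D^{aN}\prod_{i<N}(KD_i+d_i)^a$. Since $c>a$, this is $o(D^{cN}\prod_{i<N}(KD_i+d_i)^c)$ as $N\to\infty$, completing the estimate.

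The main technical obstacle is verifying that $\gamma$ and $S_N$ are non-conjugate for $N$ sufficiently large. Hypotheses \eqref{eq:Rezeta} and \eqref{eq:increasing fractions} first yield $\gamma(N)\neq 0$: letting $k$ be the largest index with $\beta_k\neq 0$, the term $\beta_k\sum_{n>N}\Re_\zeta(b_{k,n}/\alpha_{k,n})$ strictly dominates the contributions of the other $j<k$ in $\Re_\zeta(\gamma(N))$, so $\gamma\neq S_N$. To exclude $\gamma=\sigma(S_N)$ for a nontrivial Galois automorphism $\sigma$ acting on the field generated by the $\alpha_{j,n}$, the plan is to compare a Liouville-type lower bound for the nonzero algebraic number $\sigma(S_N)-S_N$ against the smallness of $|\gamma(N)|$ (the latter itself deducible from \eqref{eq:Rezeta}, \eqref{eq:increasing fractions}, \eqref{eq:house}, and \eqref{eq:house ain bound}), obtaining a contradiction for $N$ large.
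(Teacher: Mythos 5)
Your main estimate proceeds along essentially the same lines as the paper's: bound $\deg\gamma_N \leq D_N$ via Lemma~\ref{LemmaAK5} and \eqref{eq:deg bound}, bound $M(\gamma_N)$ by iterating Lemmas~\ref{LemmaAK3} and~\ref{LemmaAK4} together with \eqref{eq:house}, \eqref{eq:house ain bound} and \eqref{eq:Hancl11}, absorb the $2^{O(\sum(\log_2|\alpha_{1,n}|)^a)}$ overhead into $2^{D^{cN}\prod(KD_i+d_i)^c}$ using $c>a$, and then feed everything into Lemma~\ref{LemmaAK6}. That part is sound and matches the paper.

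The gap is in the non-conjugacy step. You correctly get $\gamma \neq S_N$ for large $N$ by isolating the dominant index $R$ via \eqref{eq:Rezeta} and \eqref{eq:increasing fractions}, but that alone does not exclude $S_N$ being some \emph{other} conjugate of $\gamma$. Your proposed fix --- compare a Liouville-type lower bound on $|\sigma(S_N)-S_N|$ against the smallness of $|\gamma(N)|$ --- does not work within the hypotheses of Lemma~\ref{Prop:eq AK2}. First, this lemma deliberately omits \eqref{eq:increase}, \eqref{eq:a1n large} and \eqref{eq:A1}, which are exactly what is needed to show $|\gamma(N)|\to 0$; deducing smallness from the listed hypotheses alone is not possible (indeed, establishing it is the whole content of the separate Lemma~\ref{Prop:eq AK2 negated}, which assumes those extra conditions). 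Second, $\sigma(S_N)$ and $S_N$ are conjugate, so Lemma~\ref{LemmaAK6} does not apply to them; you would need a discriminant-type root separation bound whose scale is not controlled by anything in the lemma's hypotheses.

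The paper avoids all of this with a cleaner observation: using \eqref{eq:Rezeta} and \eqref{eq:increasing fractions}, one shows $\Re_\zeta(\gamma_M - \gamma_N) > 0$ whenever $M > N$ are both large, so the sequence $(\gamma_N)$ is eventually injective. Since $\deg\gamma\leq D$, the number $\gamma$ has at most $D$ conjugates, so only finitely many $\gamma_N$ can coincide with any of them; hence $\gamma_N$ is non-conjugate to $\gamma$ for all sufficiently large $N$. This uses no upper bound on $|\gamma(N)|$ and no separation between conjugates, and it closes entirely within the hypotheses actually assumed in Lemma~\ref{Prop:eq AK2}.
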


\begin{lem} \label{Prop:eq AK2 negated}
	Let $D,K\in\N$, and let $A_1,A_2,a,\varepsilon>0$ be real numbers such that $a<1\leq A_1<A_2$.
	Let $\{d_n\}_{n\in\N}$ be a sequence of natural numbers, and write $D_n = \prod_{i=1}^{n}d_i$.
	For $i=1,\ldots,K$, let $\{\alpha_{i,n}\}_{n\in\N}$ and $\{b_{i,n}\}_{n\in\N}$ be sequences of complex numbers. 
	Suppose that equations 
	\eqref{eq:increase}, \eqref{eq:a1n large}, \eqref{eq:A1}, \eqref{eq:A2}, \eqref{eq:house ain bound} hold and that
	\begin{align}
		\label{eq:house bin bound} \forall n\in\N\ \forall 1\leq i\leq K:\quad |b_{i,n}| \leq 2^{(\log_2 |\alpha_{1,n}|)^a}.
	\end{align}
	Let $\beta_1,\ldots,\beta_K\in\mathbb{Z}$ be integers that are not all 0, and write
	\begin{align*}
		\gamma(N) = \sum_{j=1}^{K}\beta_j\sum_{n=N+1}^{\infty}\frac{b_{j,n}}{\alpha_{j,n}}
	\end{align*}
	Let $c\in(a,1)$. Then
	\begin{align*}
		\liminf_{N\to\infty} |\gamma(N)|\left(2^{D^{cN} \prod_{i=1}^{N-1}(KD_{i} + d_i)^{c}} \prod_{n=1}^{N} |\alpha_{1,n}|^{K} \right)^{D D_{N}} = 0.
	\end{align*} 
\end{lem}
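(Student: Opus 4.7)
The plan is to upper-bound $|\gamma(N)|$ by its first tail term along a carefully chosen subsequence, then compare the $\log_2$ of the target to a dominating negative quantity. Throughout, write $x_n:=\log_2|\alpha_{1,n}|/(D^n\prod_{i=1}^{n-1}(KD_i+d_i))$, so that $\limsup x_n=\log_2A_2$ and $\liminf x_n=\log_2A_1$.

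First I would apply the triangle inequality together with \eqref{eq:house ain bound} and \eqref{eq:house bin bound} to obtain the pointwise bound $|b_{j,n}/\alpha_{j,n}|\leq 2^{2(\log_2|\alpha_{1,n}|)^a}/|\alpha_{1,n}|$, giving
$$|\gamma(N)|\leq C\sum_{n>N}\frac{2^{2(\log_2|\alpha_{1,n}|)^a}}{|\alpha_{1,n}|}$$
for a constant $C$ depending on $K$ and the $\beta_j$. I would then pick a subsequence $\{N_k\}$ such that $x_{N_k+1}\to\log_2A_2$ (available by \eqref{eq:A2}) while $x_{N_k}$ stays bounded above by some $L_1<\log_2A_2$; the existence of such $\{N_k\}$ follows from the gap $\liminf x_n<\limsup x_n$ by a crossing argument on normalised exponents, since $x_n$ must drop below any fixed level in $(\log_2A_1,\log_2A_2)$ infinitely often immediately before an upward excursion close to $\log_2A_2$.

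Along $\{N_k\}$, the condition \eqref{eq:A1} together with \eqref{eq:a1n large} forces $|\alpha_{1,N_k+2}|$ to dwarf $|\alpha_{1,N_k+1}|$: the lower bound $x_{N_k+2}\geq\log_2A_1-o(1)$ combined with the growth of $D(KD_{N_k+1}+d_{N_k+1})$ makes $\log_2|\alpha_{1,N_k+2}|/\log_2|\alpha_{1,N_k+1}|\to\infty$. Consequently, the tail from $n\geq N_k+2$ (which by Lemma \ref{LemmaAK7} is controlled by some power of $|\alpha_{1,N_k+2}|^{-1}$) is negligible compared to the $n=N_k+1$ term, and one obtains the sharp bound $|\gamma(N_k)|\leq C\cdot 2^{O((\log_2|\alpha_{1,N_k+1}|)^a)}/|\alpha_{1,N_k+1}|$; the case $A_1=1$ requires the separate use of the polynomial lower bound $|\alpha_{1,n}|\geq n^{1+\varepsilon}$ to still make the tail negligible.

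Taking $\log_2$ of the target and analysing the three summands separately, the sum $\sum_{n=1}^{N_k}x_n\,D^n\prod_{i=1}^{n-1}(KD_i+d_i)$ is, by Lemma \ref{Lemma:Exponents inequality} together with the super-exponential growth of $D^n\prod_{i=1}^{n-1}(KD_i+d_i)$ in $n$, dominated within $(1+o(1))$ by its last term, so that
$$KDD_{N_k}\sum_{n=1}^{N_k}\log_2|\alpha_{1,n}|=x_{N_k}(1+o(1))\cdot D^{N_k+1}\prod_{i=1}^{N_k}(KD_i+d_i),$$
and the middle $c$-term is of strictly lower order because $c<1$. Collecting,
$$\log_2T(N_k)\leq\bigl(x_{N_k}-x_{N_k+1}+o(1)\bigr)\cdot D^{N_k+1}\prod_{i=1}^{N_k}(KD_i+d_i),$$
whose prefactor tends to $L_1-\log_2A_2<0$, while $D^{N_k+1}\prod_{i=1}^{N_k}(KD_i+d_i)\to\infty$; hence $\log_2T(N_k)\to-\infty$, giving the desired $\liminf_N T(N)=0$. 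The main obstacle is the simultaneous subsequence choice, specifically verifying tail-domination in the edge case $A_1=1$ where $x_n$ may approach $0$ at the index $N_k+2$.
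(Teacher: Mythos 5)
Your outline captures the right target inequality, but the mechanism you use to reach it has several genuine gaps, the central one being the subsequence selection.

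\textbf{The jump argument is not available.} You want indices $N_k$ with $x_{N_k+1}\to\log_2 A_2$ while $x_{N_k}\le L_1<\log_2 A_2$, i.e.\ a macroscopic jump in $x_n$ at $N_k$. The hypotheses $\liminf x_n=\log_2 A_1<\log_2 A_2=\limsup x_n$ do not force any jump: the normalised exponents can drift from near $\log_2 A_1$ to near $\log_2 A_2$ by arbitrarily small increments, so that whenever $x_{n+1}$ is close to $\log_2 A_2$, $x_n$ is also close to $\log_2 A_2$. The paper's proof instead selects, between an index $s_1$ where $S_{s_1}$ is near $A_1$ and an index $s_2$ where $S_{s_2}$ is near $A_2$, the first $N$ with $S_{N+1}>\bigl(1+\tfrac{1}{(N+1)^2}\bigr)\max_{s_1\le j\le N}\{S_j,A_2-2\delta\}$. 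Existence comes from the telescoping product $\prod_j(1+1/j^2)$ having bounded tail, not from any jump. This multiplicative running-maximum condition is the key device that makes the later comparison with $\prod_{n\le N}a_n^{K D D_N}$ work, and I don't see how your $L_1$-jump picture can be rescued without it.

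\textbf{The tail-domination claim is false in general.} You assert $\log_2|\alpha_{1,N_k+2}|/\log_2|\alpha_{1,N_k+1}|\to\infty$ on the grounds that $D(KD_{N_k+1}+d_{N_k+1})$ grows. But if, say, $D=K=1$ and $d_n\equiv 1$, this factor is the constant $2$, so the ratio is bounded, and the tail from $n\ge N_k+2$, controlled via Lemma~\ref{LemmaAK7} only by $a_{N_k+2}^{-\varepsilon/(1+\varepsilon)}$, is \emph{not} negligible next to the single term $\asymp a_{N_k+1}^{-1}$. The paper avoids this by not singling out one term: in the case $a_n\ge 2^n$ it bounds the block $N<n\le\log a_{N+1}$ by a single term and bounds the remainder geometrically in $2^{-n(1-c)}$, and in the complementary case $a_n<2^n$ infinitely often it introduces a separate index $k_0$ and a different splitting. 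Your proposal addresses neither of these, and your side remark that ``the case $A_1=1$'' needs $|\alpha_{1,n}|\ge n^{1+\varepsilon}$ is aimed at the right difficulty but gives no argument.

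\textbf{The ``dominated by last term within $(1+o(1))$'' step is wrong as stated.} With $E_n:=D^n\prod_{i=1}^{n-1}(KD_i+d_i)$ and the degenerate parameters above, $E_n=2^{n-1}$ and $\sum_{n\le N}E_n\approx 2E_N$, which is not $(1+o(1))E_N$. What Lemma~\ref{Lemma:Exponents inequality} actually delivers is $E_{N+1}\ge KDD_N\sum_{n\le N}E_n$, i.e.\ the comparison is against $E_{N+1}$ after scaling by $KDD_N$, and this is precisely the exponent showing up in the target quantity. Moreover, bounding $\sum x_n E_n$ by $(\max_{n\le N}x_n)\sum E_n$ uses the running maximum of $x_n$, not just $x_{N_k}$, and that running maximum tends to $\log_2 A_2$ — so without the paper's careful split of the product $\prod_{n\le N}a_n$ into an initial segment (estimated with $3A_2$), a middle block (where $(A_2-2\delta)^2>(A_2+\delta)(A_1+\delta)$ is exploited), and the range where $S_n\le\max S_j$, the sign of the prefactor cannot be controlled. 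The basic reduction $|\gamma(N)|\le\sum_{n>N}2^{(\log_2|\alpha_{1,n}|)^c}/|\alpha_{1,n}|$ in your first step is correct and matches the paper, but everything after it needs to be replaced by the running-maximum and case-split machinery.
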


One minor result that will be briefly used for proving both lemmas is that equation \eqref{eq:A2} implies
\begin{gather}
	\label{eq:Hancl11}
	|\alpha_{1,n}| \leq (2A_2)^{D^n\prod_{i=1}^{n-1} (KD_{i}+d_i)}
\end{gather}
for $n$ sufficiently large

We now prove Lemma \ref{Prop:eq AK2}:

\begin{proof}[Proof(Lemma \ref{Prop:eq AK2})]
	We introduce further notation
	\begin{gather*}
		\gamma_N :=  \sum_{i=1}^K \beta_i \sum_{n=1}^N \frac{b_{i,n}}{\alpha_{i,n}} ,
		\qquad
		\beta := \prod_{i=1}^{K} H(\beta_i).
	\end{gather*}
	By Lemma \ref{LemmaAK5} and equation \eqref{eq:deg bound}, we quickly find
	\begin{align*}
		\deg\gamma_N \leq \prod_{n=1}^{N} \deg\left(\sum_{i=1}^K \beta \frac{b_{i,n}}{\alpha_{i,n}}\right) 
		\leq D_N.
	\end{align*}
	Applying Lemma \ref{LemmaAK3} and Lemma \ref{LemmaAK4} followed by equation \eqref{eq:house}, we then get
	\begin{align*}
		M(\gamma_N) &= H(\gamma_N)^{\deg\gamma_N}
		\leq 
		\left(2^{NK} \prod_{i=1}^K H(\beta_i) \prod_{n=1}^{N} H(b_{i,n}) H\left(\frac{1}{\alpha_{i,n}}\right) \right)^{D_{N}}
		\\& 
		= 
		\left(2^{KN} \beta \prod_{i=1}^K \prod_{n=1}^{N} H(\alpha_{i,n})H(b_{i,n}) \right)^{D_{N}}
		\\&
		\leq \left(2^{KN} \beta \prod_{i=1}^K \prod_{n=1}^{N} \house{\alpha_{i,n}}\ \house{b_{i,n}} \right)^{D_{N}}
		\\&\leq \left(\beta 2^{KN + KN (\log_2 |\alpha_{1,N}|)^a}\prod_{i=1}^K \prod_{n=1}^{N} |\alpha_{i,n}| \right)^{D_N},
	\end{align*}
	using that $\alpha_{1,n}$ is non-decreasing and that $\house{b_{i,n}}=b_{i,n}$ as $b_{i,n}\in\N$.
	From equations \eqref{eq:house ain bound} and \eqref{eq:Hancl11}, we then have for $N$ sufficiently large that
	\begin{align}
		\nonumber
		M(\gamma_N)
		&< 
		\left(2^{2 KN \left(\log_2(2A_2) D^N \prod_{i=1}^{N-1}(KD_{i}+d_i)\right)^{a}} \prod_{n=1}^{N}|\alpha_{1,n}|^K\right)^{D_{N}}
		\\& \label{eq:MgammaN}
		\leq 
		\left(\frac{2^{D^{c N} \prod_{i=1}^{N-1}(KD_{i}+d_i)^{c}}}{2H(\gamma)} \prod_{n=1}^{N}|\alpha_{1,n}|^K\right)^{D_N}
		.
	\end{align}
	We now wish to apply Lemma \ref{LemmaAK7} to get an estimate on $|\gamma(N)|$.
	To do so, we need $\gamma\neq \gamma_N$, which is ensured for sufficiently large $N$ if the $\gamma_N$ are mutually distinct from a some point.
	\begin{claim}[$\gamma_M\neq\gamma_N$ for $M> N$ sufficiently large]
		To see this, let $R$ be the maximal value of $i$ such that $\beta_i\neq 0$ and assume without loss of generality that $\beta_R>0$ (otherwise replace each $\beta_i$ by $-\beta_i$ for all $i$).
		Using that $\Re_\zeta$ is clearly linear in $\mathbb{R}$, we find for each $n\in\N$ that
		\begin{align*}
			\Re_\zeta \left(\sum_{j=1}^{K} \beta_j \frac{b_{j,n}}{\alpha_{j,n}}\right) 
			&= \sum_{j=1}^{K} \beta_j \Re_\zeta\left( \frac{b_{j,n}}{\alpha_{j,n}}\right)
			\\&
			= \Re_\zeta \left( \frac{b_{R,n}}{\alpha_{R,n}} \right) \left(
			\beta_R + \sum_{j=1}^{R-1} \beta_j \frac{\Re_\zeta( b_{j,n}/ \alpha_{j,n} )}{\Re_\zeta( b_{R,n}/\alpha_{R,n} )} \right)
		\end{align*}
		Equation \eqref{eq:increasing fractions} then implies that for $n$ sufficiently large, we have
		\begin{align*}
			\left|
			\sum_{j=1}^{R-1} \beta_j \frac{\Re_\zeta( b_{j,n}/ \alpha_{j,n} )}{\Re_\zeta( b_{R,n}/\alpha_{R,n} )}
			\right|
			< \beta_R,
		\end{align*}
		and thus $\Re_\zeta \left(\sum_{j=1}^{K} \beta_j \frac{b_{j,n}}{\alpha_{j,n}}\right) > 0$, by equation \eqref{eq:Rezeta}.
		For $N$ sufficiently large, it hence follows that
		\begin{align*}
			\Re_\zeta \gamma_N = \sum_{n=1}^{N} \Re_\zeta \sum_{j=1}^{K} \beta_j \frac{b_{j,n}}{\alpha_{j,n}}
			< \sum_{n=1}^{M} \Re_\zeta \sum_{j=1}^{K} \beta_j \frac{b_{j,n}}{\alpha_{j,n}}
			= \Re_\zeta \gamma_M,
		\end{align*}
		which implies the claim.
	\end{claim}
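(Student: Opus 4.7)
My plan for the claim is to establish strict monotonicity of $\Re_\zeta \gamma_N$ for all sufficiently large $N$, which immediately yields $\gamma_M \neq \gamma_N$ whenever $M > N$ are both past some threshold. Since $\Re_\zeta(z) = \Re(\overline{\zeta} z)$ is $\mathbb{R}$-linear and $\gamma_M - \gamma_N = \sum_{n=N+1}^{M} \sum_{j=1}^{K} \beta_j \frac{b_{j,n}}{\alpha_{j,n}}$, it suffices to prove that the per-index increment
\begin{align*}
	T_n := \Re_\zeta\left(\sum_{j=1}^{K}\beta_j \frac{b_{j,n}}{\alpha_{j,n}}\right)
\end{align*}
is strictly positive (after possibly replacing each $\beta_j$ by $-\beta_j$ simultaneously) for every $n$ larger than some $n_0$; summation from $N+1$ to $M$ then gives $\Re_\zeta(\gamma_M - \gamma_N) > 0$, and in particular $\gamma_M \neq \gamma_N$.

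To prove positivity of $T_n$, I would let $R$ be the largest index for which $\beta_R \neq 0$ and, by negating all $\beta_j$ together if necessary, assume $\beta_R > 0$. Using $\mathbb{R}$-linearity of $\Re_\zeta$ and factoring out $\Re_\zeta(b_{R,n}/\alpha_{R,n})$, which is strictly positive by \eqref{eq:Rezeta}, I rewrite
\begin{align*}
	T_n = \Re_\zeta\!\left(\frac{b_{R,n}}{\alpha_{R,n}}\right)\left(\beta_R + \sum_{j=1}^{R-1} \beta_j \frac{\Re_\zeta(b_{j,n}/\alpha_{j,n})}{\Re_\zeta(b_{R,n}/\alpha_{R,n})}\right).
\end{align*}
By hypothesis \eqref{eq:increasing fractions}, each ratio appearing in the inner sum tends to $0$ as $n \to \infty$, so for all sufficiently large $n$ the expression in the large parentheses is bounded below by $\beta_R / 2 > 0$, whence $T_n > 0$ and the monotonicity follows.

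I expect no real obstacle here: the condition \eqref{eq:increasing fractions} is engineered precisely so that the index $j = R$ dominates within each $n$-slice. The only points meriting care are the $\mathbb{R}$-linearity of $\Re_\zeta$ (immediate from $\Re_\zeta(z) = \Re(\bar{\zeta}z)$ together with $\beta_j \in \mathbb{Z} \subset \mathbb{R}$) and the observation that simultaneously negating all the $\beta_j$ neither changes whether two partial sums $\gamma_M$ and $\gamma_N$ coincide nor affects the hypotheses \eqref{eq:Rezeta} and \eqref{eq:increasing fractions}, since the latter involves a ratio of two such real parts.
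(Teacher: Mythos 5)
Your argument is correct and follows essentially the same route as the paper: isolate the largest index $R$ with $\beta_R\neq 0$, normalise so $\beta_R>0$, factor out $\Re_\zeta(b_{R,n}/\alpha_{R,n})>0$ from the $n$-th increment, and use \eqref{eq:increasing fractions} to make the remaining bracket positive for large $n$, giving strict monotonicity of $\Re_\zeta\gamma_N$. The only cosmetic difference is your lower bound $\beta_R/2$ where the paper bounds the perturbing sum by $\beta_R$ in absolute value; the content is identical.
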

	Since $\gamma$ can have at most $D$ conjugates, it then follows that $\gamma$ and $\gamma_N$ must be non-conjugate for $N$ sufficiently large, and we may apply Lemma \ref{LemmaAK6}, Lemma \ref{LemmaAK3}, and equation \eqref{eq:MgammaN} (in that order) to find
	\begin{align*}
		|\gamma(N)|&
		= |\gamma - \gamma_N|
		\geq \frac{1}{2^{\deg(\gamma)\deg(\gamma_N)} M(\gamma)^{\deg(\gamma_N)} M(\gamma_N)^{\deg(\gamma)}}
		\\&
		\geq
		\frac{1}{2^{DD_{N}} H(\gamma)^{DD_{N}}  M(\gamma_N)^{D}}
		\\&
		\geq \frac{1}{\left( 2^{D^{cN}\prod_{i=1}^{N-1}(KD_{i}+d_i)^{c}} \prod_{n=1}^{N} |\alpha_{1,n}|^{K}\right)^{D D_{N}}}.
	\end{align*}
	This proves the lemma.
\end{proof}

\begin{proof}[Proof (Lemma \ref{Prop:eq AK2 negated})]
	Applying equations \eqref{eq:house ain bound} and \eqref{eq:house bin bound} followed by \eqref{eq:increase}, we find
	\begin{align}
		\nonumber
		|\gamma(N)| &= \left| \sum_{n=N+1}^{\infty}\sum_{j=1}^{K} \frac{b_{j,n}}{\alpha_{j,n}} \right|
		\leq \sum_{n=N+1}^{\infty} \sum_{j=1}^{K} \frac{|b_{j,n}|}{|\alpha_{j,n}|}
		\\&\label{eq:AK3}
		\leq 
		\sum_{n=N+1}^{\infty} \frac{K 2^{(\log_2|\alpha_{1,n}|)^a} }{ |\alpha_{1,n}| 2^{-(\log_2|\alpha_{1,n}|)^a}}
		\leq 
		\sum_{n=N+1}^{\infty} \frac{2^{(\log_2 |\alpha_{1,n}|)^c} }{ |\alpha_{1,n}|},
	\end{align}
	for $N$ sufficiently large.
	
	We now split into two cases, both using the notation
	\begin{align*}
		a_n := |\alpha_{1,n}|,
		\qquad
		S_n := a_n^{\frac{1}{D^n \prod_{i=n}^{n-1}(KD_{i}+d_i)}}.
	\end{align*}
	\case{$a_n\geq 2^n$ for all $n$ sufficiently large}
		We continue on equation \eqref{eq:AK3} and use that the function $x^{(\log_2 x)^c}/x$ is decreasing for $x>1$ to find
		\begin{align}
			|\gamma(N)| &\nonumber
			\leq \sum_{N<n\leq \log a_{N+1}}^{\infty} \frac{2^{(\log_2 a_{n})^c} }{ a_{n}} + \sum_{n> \log a_{N+1}}^{\infty} \frac{2^{(\log_2 a_{n})^c} }{ a_{n}}
			\\&\nonumber
			\leq  \frac{2^{2(\log_2 a_{N+1})^c} }{ a_{N+1}} + \sum_{n> \log a_{N+1}}^{\infty} \frac{2^{(\log_2 2^n)^c} }{ 2^n }
			\\&\nonumber
			=  \frac{2^{2(\log_2 a_{N+1})^c} }{ a_{N+1}} + \sum_{n> \log a_{N+1}}^{\infty} \frac{1 }{ 2^{n-cn} }
			\\&
			\label{eq:Hancl14}
			\leq \frac{2^{2(\log_2 a_{N+1})^c} }{ a_{N+1}} + C\frac{1}{2^{\log_2 a_{N+1} - (\log_2 a_{N+1})^c}}
			\leq \frac{2^{(\log_2 a_{N+1})^\omega}}{a_{N+1}},
		\end{align}
		for sufficiently large $N$, where $C>0$ and $\omega\in(c,1)$ do not depend on $N$.
		The above equation is (safe for notational differences) a direct transcription of equation (14) of \cite{HanclCriterion}, which is repeated here for clarity.
	
		Next, we will make a choice of $N$ that will later show the conclusion of Lemma \ref{Prop:eq AK2 negated}.
		Let $\delta>0$ be a ``sufficiently'' small number (we will later make uniform assumptions on its size).
		By equations \eqref{eq:A2} and \eqref{eq:A1}, there exist $s_0\in N$ such that
		\begin{align}\label{eq:Hancl15}
			\max\{1, A_1-\delta\} < S_n < A_2 + \delta
		\end{align}
		holds for all $n\geq s_0$.
		For each such $s_0$, pick $s_1 \in \N$ minimal such that
		\begin{align}\label{eq:Hancl16}
			s_1 > D^{s_0}\prod_{i=1}^{s_0-1} (KD_{i}+d_i),
			\quad
			\max\{1,A_1-\delta\} < S_{s_1} < A_1 + \delta,
		\end{align}
		and pick then $s_2\in\N$ minimal such that
		\begin{align}\label{eq:Hancl17}
			s_2 > s_1,
			\quad
			A_2-\delta < S_{s_2} < A_2 + \delta.
		\end{align}
		For sufficiently large $s_0$, pick $N=N(s_0)\in\N$ minimal such that
		\begin{align}\label{eq:Hancl18}
			s_1\leq N<s_2,
			\quad
			S_{N+1} > \left(1 + \frac{1}{(N+1)^2}\right) \max_{s_1\leq j\leq N} \{S_j, A_2 - 2\delta\}.
		\end{align}
		This is doable as the contrary would imply
		\begin{align*}
			A_2 - \delta &< S_{s_2} \leq \left(1 + \frac{1}{s_2^2}\right) \max_{s_1\leq j<s_2} \{S_j, A_2 - 2\delta\}
			\\&
			\leq \ldots\leq \max \{S_{s_1}, A_2 - 2\delta\} \prod_{j=s_1+1}^{s_2}\left(1+\frac{1}{j^2}\right)
			\\&
			\leq (A_2 - 2\delta) \prod_{j=s_1+1}^{\infty}\left(1+\frac{1}{j^2}\right),
		\end{align*}
		which would be a contradiction for large enough $s_0$ (and thus $s_1$), regardless of $\delta$.
		We then apply equation \eqref{eq:Hancl18} along with Lemma \ref{Lemma:Exponents inequality} to find
		
		\begin{align}
			\nonumber
			a_{N+1} =&\; S_{N+1}^{D^{N+1}\prod_{i=1}^{N}(KD_i + d_i)} 
			\\\nonumber
			>& \left(1+\frac{1}{(N+1)^2}\right)^{D^{N+1}\prod_{i=1}^{N}(KD_i + d_i)}
			\ \max_{s_1\leq j\leq N}\{S_j, A_2-2\delta\}^{D^{N+1}\prod_{i=1}^{N}(KD_i + d_i)}
			\\\nonumber
			\geq& \left(1+\frac{1}{(N+1)^2}\right)^{D^{N+1}\prod_{i=1}^{N}(KD_i + d_i)}
			\\&\nonumber\ 
			\left(\max_{s_1\leq j\leq N}\{S_j, A_2-2\delta\}^{\sum_{n=1}^{N} D^{n}\prod_{i=1}^{n-1}(KD_i + d_i)}\right)^{KDD_{N}}
			\\\nonumber
			\geq& 
			\left(1+\frac{1}{(N+1)^2}\right)^{D^{N+1}\prod_{i=1}^{N}(KD_i + d_i)} \left(\prod_{n=s_1+1}^{N}{a_n}\right)^{KDD_N}
			\\&\nonumber \
			\left(
			\prod_{n=1}^{s_1}(A_2 - 2\delta)^{D^{n} \prod_{i=1}^{n-1} (KD_i + d_i)}
			\right)^{KDD_{N}}
			\\\nonumber \geq& \left(1+\frac{1}{(N+1)^2}\right)^{D^{N+1}\prod_{i=1}^{N}(KD_i + d_i)} \left(\prod_{n=1}^{N}{a_n}\right)^{KDD_N}
			\\&\label{eq:aN+1 bound case 1} \
			\left(\frac{1}{\prod_{n=1}^{s_0-1}{a_n}}
			\prod_{n=s_0}^{s_1}\frac{(A_2 - 2\delta)^{D^{n} \prod_{i=1}^{n-1} (KD_i + d_i)}}{{a_n}}
			\right)^{KD D_{N}},
		\end{align}
		for a small enough choice of $\delta$.
		For sufficiently large $s_0$, equation \eqref{eq:Hancl11} gives
		\begin{align}
			\label{eq:case1 1/prod a_n}
			\frac{1}{\prod_{n=1}^{s_0-1}{a_n}}
			&\geq \frac{1}{\prod_{n=1}^{s_0-1} (3A_2)^{D^{n} \prod_{i=1}^{n-1} (KD_i + d_i)}} 
			\geq (3A_2)^{-N},
		\end{align}
		by choice of $s_1$ and $N$.
		Meanwhile, equations \eqref{eq:Hancl15} and \eqref{eq:Hancl16} followed by Lemma \ref{Lemma:Exponents inequality} give
		\begin{align}
			\nonumber
			\prod_{n=s_0}^{s_1} &\frac{(A_2 - 2\delta)^{D^{n} \prod_{i=1}^{n-1} (KD_i + d_i)}}{{a_n}}
			\\&\nonumber
			\geq
			\left(\prod_{n=s_0}^{s_1-1}\frac{(A_2 - 2\delta)^{D^{n} \prod_{i=1}^{n-1} (KD_i + d_i)}}{(A_2+\delta)^{D^{n} \prod_{i=1}^{n-1} (KD_i + d_i)}}\right)
			\frac{(A_2 - 2\delta)^{D^{s_1} \prod_{i=1}^{s_1-1} (KD_i + d_i)}}{(A_1+\delta)^{D^{s_1} \prod_{i=1}^{s_1-1} (KD_i + d_i)}}
			\\ \label{eq:case1 prod A2-2delta / an}
			&\geq
			\prod_{n=s_0}^{s_1-1}\left(\frac{(A_2 - 2\delta)^2}{(A_2+\delta)(A_1+\delta)} \right)^{D^{n} \prod_{i=1}^{n-1} (KD_i + d_i)}
			\geq
			1,
		\end{align}
		by choosing $\delta>0$ small enough that $(A_2-2\delta)^2 > (A_2+\delta) (A_1 + \delta)$.
		Notice that since $d_i$ and $K$ are all positive integers, we must have $KD_i + d_i \geq 2$, which ensures
		\begin{align*}
			\prod_{i=1}^{N}(KD_i + d_i) &
			\geq \frac{\log 2}{\log\left(1+\frac{1}{(N+1)^2}\right)} N^3 D_N \prod_{i=1}^{N-1}(KD_i + d_i)^\omega,
		\end{align*}
		for large enough $N$ (recall $c<\omega<1$), using that $1/\log\big(1+\frac{1}{(N+1)^2}\big)$ is dominated by the polynomial $(N+1)^2$. Thus
		\begin{align}\label{eq:bound on power of 1/(+1 1/(1+N^2))}
			\left(1+\frac{1}{(N+1)^2}\right)^{D^{N+1}\prod_{i=1}^{N}(KD_i + d_i)}
			&\geq 2^{N^3 D^{N+1} D_N\prod_{i=1}^{N-1}(KD_i + d_i)^\omega}.
		\end{align}
		Applying this as well as equations \eqref{eq:case1 1/prod a_n} and \eqref{eq:case1 prod A2-2delta / an} to equation \eqref{eq:aN+1 bound case 1}, we have
		\begin{align*}
			a_{N+1} &
			\geq \left( \prod_{n=1}^{N}a_n \right)^{KDD_{N}} 
			2^{N^3 D^{N+1} D_N \prod_{i=1}^{N-1}(KD_i + d_i)^\omega} 
			(3A_2)^{-KNDD_{N}} 
			\\&
			\geq \left( \prod_{n=1}^{N}a_n \right)^{KDD_{N}} 2^{ N^2 D^{N+1}\prod_{i=1}^{N}(KD_i + d_i)^\omega},
		\end{align*}
		for $N(s_0)$ large enough, using that $(KD_N+d_N)/D_N\leq K+1$, and $K$ is constant.
		Recalling equations \eqref{eq:Hancl14} and \eqref{eq:Hancl11}, we now have
		\begin{align*}
			|\gamma(N)|&\leq \frac{2^{(\log_2 a_{N+1})^\omega}}{a_{N+1}}
			\leq \left( \prod_{n=1}^{N}{a_n} \right)^{-KDD_{N}} \frac{
				2^{\left(\log_2(2A_2) D^{N+1} \prod_{i=1}^{N} (KD_i+d_i)\right)^\omega }
			}{
				2^{ N^2 D^{N+1}\prod_{i=1}^{N}(KD_i + d_i)^\omega}
			}
			\\& \leq \left( \prod_{n=1}^{N}{a_n} \right)^{-KDD_{N}} 2^{-N D^{N+1} \prod_{i=1}^{N} (KD_i+d_i)^\omega},
		\end{align*}
		and so
		\begin{align*}
			|\gamma(N)|&\left(2^{\left(D^{N} \prod_{i=1}^{N-1} (KD_i+d_i)\right)^c} \prod_{n=1}^{N} \house{a_{1,n}}^{K} \right)^{D D_{N}}
			\\&
			\leq \frac{ 2^{D^{cN} \prod_{i=1}^{N-1} (KD_i+d_i)^{c}} }{ 2^{N D^{N} \prod_{i=1}^{N} (KD_i+d_i)^\omega}}
			\leq 2^{ -(K+1)^{\omega N}},
		\end{align*}
		for all sufficiently large $N(s_0)$.
		As this becomes arbitrarily small as $s_0$ tends to infinity, the lemma follows.
	\case{$a_n < 2^n$ infinitely often}
	Put $A = (1+A_2)/2$.
	By equation \eqref{eq:A2}, we may pick arbitrarily large $k\in\N$ such that
	\begin{align}\label{eq:Hancl22}
		S_k > A.
	\end{align}
	For each such $k$, pick $k_0\in\N$ maximal such that
	\begin{align}\label{eq:Hancl20}
		k_0 \leq k,
		\qquad a_{k_0} < 2^{k_0}.
	\end{align}
	Notice that the case assumption implies
	\begin{align}\label{eq:k0bounds}
		k_0 \underset{k\to\infty}{\longrightarrow} \infty.
	\end{align}
	As clearly $k_0<k$ for just slightly large $k$, pick $N\in\N$ minimal such that
	\begin{align}\label{eq:Hancl23}
		k_0\leq N<k,
		\qquad 
		S_{N+1} > \left(1+ \frac{1}{(N+1)^2}\right) \max_{k_0\leq j\leq N} S_j.
	\end{align}
	Such $N$ must exist as the contrary would imply
	\begin{align*}
		A &< S_k 
		\leq \left(1+ \frac{1}{k^2}\right) \max_{k_0\leq j<k} S_j
		\leq \cdots \leq S_{k_0}\prod_{j=k_0}^{k} \left(1+ \frac{1}{j^2}\right)
		\\&
		< S_{k_0} \prod_{j=k_0}^{\infty} \left(1+ \frac{1}{j^2}\right)
	\end{align*}
	for large enough $k$, as the number
	\begin{align*}
		C_k := S_{k_0} \prod_{j=k_0}^{\infty} \left(1+ \frac{1}{j^2}\right)
	\end{align*}
	tends to 1 as $k$ (and thus $k_0$, by \eqref{eq:k0bounds}) tends to infinity.
	Following the same argument, we may also conclude that $S_n < C_k$ for all $k_0\leq n\leq N$ when $k$ is sufficiently large.
	That leads to
	\begin{align*}
		\prod_{n=1}^{N}a_n &= \left(\prod_{n=1}^{k_0} a_n \right) \prod_{n=k_0+1}^{N} a_n
		< \left(\prod_{n=1}^{k_0}2^k\right) \prod_{n=k_0+1}^{N} C_k^{D^n\prod_{i=1}^{n-1}(KD_{i}+d_i)}
		\\&
		\leq 2^{k_0^2} C_k^{D^{N}\prod_{i=1}^{N-1}(KD_{i}+d_i)}
		\prod_{n=k_0+1}^{N-1} C_k^{D^n\prod_{i=1}^{n-1}(KD_{i}+d_i)},
	\end{align*}
	by using the choice of $k_0$ and equation \eqref{eq:increase}.
	Applying Lemma \ref{Lemma:Exponents inequality} and the lower bound on $N$, we reach
	\begin{align}\label{eq:AK12}
		\prod_{n=1}^{N} a_n &
		< 2^{N^2} \left(C_k^{D^{N}\prod_{i=1}^{N-1}(KD_{i}+d_i)}\right)^2
		= 2^{N^2} \left(C_k^2\right)^{D^{N}\prod_{i=1}^{N-1}(KD_{i}+d_i)}.
	\end{align}
	Aiming for a lower bound on $a_{N+1}$, we use equation \eqref{eq:Hancl23} and Lemma \ref{Lemma:Exponents inequality} to find
	\begin{align*}
		a_{N+1} =\; & S_{N+1}^{D^{N+1}\prod_{i=1}^{N}(KD_{i}+d_i)}
		\\
		>\; & \left( 1+\frac{1}{(N+1)^2} \right)^{D^{N+1}\prod_{i=1}^{N}(KD_{i}+d_i)}
		\\&
		\left( \max_{k_0\leq j\leq N} S_j \right)^{D^{N+1}\prod_{i=1}^{N}(KD_{i}+d_i)}
		\\
		\geq\; & \left(1+\frac{1}{(N+1)^2}\right)^{D^{N+1}\prod_{i=1}^{N}(KD_{i}+d_i)} 
		\\&
		\left(\max_{k_0\leq j\leq N} S_j\right)^{KD D_{N} \sum_{n=1}^{N} D^n \prod_{i=1}^{n-1}(KD_{i}+d_i)}
		\\
		\geq\; & \left(1+\frac{1}{(N+1)^2}\right)^{D^{N+1}\prod_{i=1}^{N}(KD_{i}+d_i)}
		\\&
		\left(\prod_{n=1}^{N} a_n\right)^{KD D_{N}}
		\left(\prod_{n=1}^{k_0-1} \frac{1}{a_n}\right)^{KD D_{N}}
		.
	\end{align*}
	By equation \eqref{eq:increase} and the choice of $k_0$, we have
	\begin{align*}
		\left(\prod_{n=1}^{k_0-1} \frac{1}{a_n}\right)^{KD D_{N}}
		&
		\geq \left(\prod_{n=1}^{k_0-1} 2^{-k_0}\right)^{KD D_{N}}
		\geq 2^{-K N^2 D D_{N}}.
	\end{align*}
	Recalling equation \eqref{eq:bound on power of 1/(+1 1/(1+N^2))} (which uses neither case assumption nor choice of $N$), we get for sufficiently large $N$ that
	\begin{align}
		\nonumber
		a_{N+1} &
		> 2^{N^3 D^{N+1} D_N\prod_{i=1}^{N-1} (KD_i+d_i)^\omega}
		\left(\prod_{n=1}^{N} a_n\right)^{KD D_{N}}
		2^{-KN^2 D D_{N}}
		\\&\label{eq:AK11}
		\geq 2^{N^2 D^{N+1} \prod_{i=1}^{N} (KD_i+d_i)^\omega}
		\left(\prod_{n=1}^{N} a_n\right)^{KD D_{N}}.
	\end{align}
	
	Repeating equation (20) of \cite{HanclCriterion}, we use that the function $2^{(\log x)^c}/x$ is decreasing combined with equation \eqref{eq:a1n large} to see that
	\begin{align*}
		\sum_{n=k}^\infty \frac{2^{(\log_2 a_{n})^c}}{a_n}
		&= \sum_{k\leq n\leq a_{k}^a} \frac{2^{(\log_2 a_{n})^c}}{a_n} + \sum_{n> a_{k}^a}^{\infty} \frac{2^{(\log_2 a_{n})^c}}{a_n}
		\\& 
		\leq a_{k}^a \frac{2^{(\log_2 a_{k})^c}}{a_{k}} + \sum_{n> a_{k}^a} \frac{2^{(\log_2 n^{1+\varepsilon})^c}}{n^{1+\varepsilon}}
		\\&
		\leq a_{k}^{(a-1)/2} + \sum_{n> a_{k}^a} \frac{1}{n^{1+\varepsilon/2}}
		\leq a_{k}^{(a-1)/2} + B_0 \frac{1}{(a_{k}^a)^{1+\varepsilon/2}}
		\\& 
		\leq a_{k}^{(a-1)/2} + a_{k}^{-a \varepsilon/3}
		\leq a_{k}^{-B},
	\end{align*}
	for $k$ sufficiently large, for some $0<B<1<B_0$ not depending on $k$.
	By equations \eqref{eq:AK3}, \eqref{eq:Hancl22} and \eqref{eq:AK11}, we then have
	\begin{align*}
		|\gamma(N)| &\leq \sum_{n=N+1}^{k-1} \frac{2^{(\log_2 a_n)^c}}{a_n} + \sum_{n=k}^{\infty} \frac{2^{(\log_2 a_n)^c}}{a_n}
		\leq \frac{2^{(\log_2 a_{N+1})^\omega}}{a_{N+1}} + a_{k}^{-B}
	\end{align*}
	Thus
	\begin{align*}
		|\gamma(N)|&\left(2^{D^{cN} \prod_{i=1}^{N-1} (KD_i+d_i)^{c}} \prod_{n=1}^{N} a_n^{K} \right)^{D D_{N}}
		\\&
		\leq \left(\frac{2^{(\log_2 a_{N+1})^\omega}}{a_{N+1}} + a_{k}^{-B}
		\right)
		\left(2^{D^{cN} \prod_{i=1}^{N-1} (KD_i+d_i)^{c}} \prod_{n=1}^{N} a_n^{K} \right)^{D D_{N}}
	\end{align*}
	It follows by $c<\omega$ and equations \eqref{eq:Hancl11} and \eqref{eq:AK11} that
	\begin{align*}
		\frac{2^{(\log_2 a_{N+1})^\omega}}{a_{N+1}} &
		\left(2^{D^{cN} \prod_{i=1}^{N-1} (KD_i+d_i)^{c}} \prod_{n=1}^{N} a_n^{K} \right)^{D D_{N}}
		\\&
		< 
		\frac{2^{
				(\log_2(2A_2)+1) 
				D^{\omega(N+1)} \prod_{i=1}^{N} (KD_i+d_i)^{\omega}
		}}{2^{
				N^2 D^{N+1} \prod_{i=1}^{N} (KD_i+d_i)^{\omega}
		}}
		< 2^{-(K+1)^{\omega N}},
	\end{align*}
	for sufficiently large $N$.
	Meanwhile, equations \eqref{eq:Hancl22} and \eqref{eq:AK12} imply that
	\begin{align*}
		a_{k}^{-B} &\left(2^{((K+1)DD_{N/2})^{cN}} \prod_{n=1}^{N} a_n^{K} \right)^{D D_{N}}
		\\&
		< \left(2^{D^{cN} \prod_{i=1}^{N-1} (KD_i+d_i)^{c}}  \right)^{D D_{N}} 
		\frac{\left(
			 2^{N^2} \left(C_k^2\right)^{D^{N}\prod_{i=1}^{N-1}(KD_{i}+d_i)} 
		\right)^{D D_{N}} }{ A^{B D^{k}\prod_{n=1}^{k-1} (KD_{i}+d_i)}}
		\\&
		= \left( 
			2^{N^2 + D^{cN} \prod_{i=1}^{N-1} (KD_i+d_i)^{c}}
		\right)^{D D_{N}}  \frac{
			\left(C_k^2\right)^{D^{N+1}D_{N}\prod_{i=1}^{N-1}(KD_{i}+d_i)}
		}{ (A^B)^{ D^{N+1}\prod_{n=1}^{N} (KD_{i}+d_i)}}
		\\&
		< 2^{D^{cN} \prod_{i=1}^{N} (KD_i+d_i)^{c}}
		\Big/ (A^{B/2})^{ D^{N+1}\prod_{n=1}^{N} (KD_{i}+d_i)}
		\leq 2^{-(K+1)^{cN}}
		,
	\end{align*}
	using that $C_k^2 < A^{B/2}$ for $k$ (and thus $N$) sufficiently large.
	For $k$ sufficiently large, we conlcule
	\begin{align*}
		|\gamma(N)| \left(2^{((K+1)DD_{N/2})^{cN}} \prod_{n=1}^{N} a_n^{K} \right)^{D D_{N}} 
		< 2^{-(K+1)^{\omega N}} + 2^{-(K+1)^{c N}},
	\end{align*}
	which clearly tends to 0 as $k$ (and thus $N$) grows large, and the lemma follows.
\end{proof}
\begin{proof}[Proof (Theorem \ref{conj problem 1})]
	It is clear that the entire hypothesis of Lemma \ref{Prop:eq AK2 negated} is implied by the hypothesis of Theorem \ref{conj problem 1}, as equation \eqref{eq:house} implies equation \eqref{eq:house bin bound}.
	It is likewise clear that the only part of the hypothesis of Lemma \ref{Prop:eq AK2} that is not also used in the hypothesis of Theorem \ref{conj problem 1} is the assumption that $\deg\gamma \leq D$.
	As the conclusions of the two lemmas are mutually exclusive, we conclude $\deg\gamma>D$.
\end{proof}
\section*{Concluding Remarks}
As in the case of Theorem \ref{thm:Simon}, the requirements using $\Re_\zeta$ (i.e. equations \eqref{eq:Rezeta} and \eqref{eq:increasing fractions}) are used solely to ensure that $\gamma_N$ is non-zero and non-conjugate to $\gamma$ for all sufficiently large $N$. Consequently, these requirements may be replaced any other set of conditions ensuring that property.
Note, however, that the property is required as one might otherwise construct a sequence converging to a rational number while satisfying all other parts of the hypothesis.

In the case of $K=1$, Theorem \ref{conj problem 1} implies
\begin{thm}\label{conj problem 1 K=1}
	Let $D\in\N$ be a natural number, let $\zeta\in\C$ with $|\zeta|=1$, and let $a,\varepsilon>0$ be real numbers. 
	Let $\{\alpha_{n}\}_{n\in\N}$ be a sequence of algebraic integers, and let $\{b_n\}_{n\in\N}$ be a sequences of rational integers.
	For $n\in\N$, write $d_n= \deg \alpha_n$ and $D_n = \prod_{i=1}^{n}d_i$. Suppose that
	\begin{align*}
	\forall n\in\N:
	\quad& n^{1+\varepsilon}\leq |\alpha_{n}| < |\alpha_{n+1}|
	,\\
	1\leq \liminf_{n\to\infty} |\alpha_{n}|^{ \frac{1}{D^n\prod_{i=1}^{n-1} (D_i + d_i)} } <
	&\limsup_{n\to\infty}|\alpha_{n}|^{ \frac{1}{D^n\prod_{i=1}^{n-1} (D_i + d_i)} } < \infty
	,\\
	\forall n\in\N: 
	\quad& b_{n} \house{\alpha_{n}} \leq 2^{(\log_2 |\alpha_{n}|)^a} |\alpha_{n}|
	,\\
	\forall n\in\N:
	\quad&\Re_\zeta({\alpha_{n}}) >0 
	.
	\end{align*}
	Then $\sum_{n=1}^{\infty} \frac{1}{\alpha_{n}}$ has algebraic degree strictly greater than $D$.
\end{thm}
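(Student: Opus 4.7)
The plan is to derive Theorem~\ref{conj problem 1 K=1} as a direct specialisation of Theorem~\ref{conj problem 1} to the single-sequence case $K=1$, under the identifications $\alpha_{1,n}:=\alpha_n$, $b_{1,n}:=b_n$, and the given $d_n:=\deg\alpha_n$.

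First I would verify that each of the hypotheses \eqref{eq:increase}--\eqref{eq:increasing fractions} of Theorem~\ref{conj problem 1} follows from the hypotheses of Theorem~\ref{conj problem 1 K=1} when $K=1$. The growth conditions \eqref{eq:increase} and \eqref{eq:a1n large} are identical. Condition \eqref{eq:deg bound} reduces to $[\Q(\alpha_n):\Q]\leq d_n$, which is an equality by the choice $d_n=\deg\alpha_n$. The exponent in \eqref{eq:A1} and \eqref{eq:A2} simplifies to $D^n\prod_{i=1}^{n-1}(D_i+d_i)$ when $K=1$, matching the hypothesis; the strict separation of liminf and limsup together with the upper bound $\limsup<\infty$ yields admissible real numbers $1\leq A_1<A_2<\infty$ for Theorem~\ref{conj problem 1}. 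Condition \eqref{eq:house ain bound}, being quantified over $1<i\leq K$, is vacuous for $K=1$; likewise \eqref{eq:increasing fractions}, which is quantified over $1\leq i<j\leq K$. Condition \eqref{eq:house} is directly inherited.

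The only subtle step is \eqref{eq:Rezeta}: Theorem~\ref{conj problem 1} requires $\Re_\zeta(b_n/\alpha_n)>0$, whereas Theorem~\ref{conj problem 1 K=1} assumes $\Re_\zeta(\alpha_n)>0$. Writing $b_n/\alpha_n=b_n\overline{\alpha_n}/|\alpha_n|^2$ and using $\Re(\bar\zeta\,\overline{\alpha_n})=\Re(\overline{\zeta\alpha_n})=\Re(\zeta\alpha_n)=\Re_{\bar\zeta}(\alpha_n)$, one sees that $\Re_{\bar\zeta}(b_n/\alpha_n)$ has the same sign as $b_n\cdot\Re_\zeta(\alpha_n)$. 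Hence, when $b_n>0$, the hypothesis $\Re_\zeta(\alpha_n)>0$ implies $\Re_{\bar\zeta}(b_n/\alpha_n)>0$, and one applies Theorem~\ref{conj problem 1} with the unit $\bar\zeta$ in place of $\zeta$.

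Finally, the conclusion of Theorem~\ref{conj problem 1} in the case $K=1$ asserts $\deg\gamma>D$ for every nontrivial $\Q$-linear combination $\gamma=\beta_1\sum_{n=1}^{\infty} b_n/\alpha_n$ with $\beta_1\in\mathbb{Z}\setminus\{0\}$; taking $\beta_1=1$ (and specialising to $b_n\equiv 1$ to recover the literal form $\sum 1/\alpha_n$) gives the desired degree bound. I anticipate no genuine obstacle here: the argument is essentially a verification of notation, the only real nuance being the reconciliation of the sign condition via complex conjugation.
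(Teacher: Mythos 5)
Your proposal is correct and matches the paper's approach: the paper presents Theorem~\ref{conj problem 1 K=1} as an immediate specialisation of Theorem~\ref{conj problem 1} to $K=1$ with no further argument, and you carry out exactly the routine verification that each hypothesis reduces correctly. Your reconciliation of the sign condition by switching to $\bar\zeta$ is the right move; just note that the displayed chain as written gives $\Re_\zeta\left(b_n/\alpha_n\right)=\tfrac{b_n}{|\alpha_n|^2}\Re_{\bar\zeta}(\alpha_n)$ rather than the claimed relation for $\Re_{\bar\zeta}\left(b_n/\alpha_n\right)$, but the two are equivalent under $\zeta\leftrightarrow\bar\zeta$, so the conclusion stands.
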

By doing the right modifications to the proof of Theorem \ref{thm:Simon}, it may be improved so that the sequence $\{\alpha_n\}_{n\in\N}$ only needs to satisfy the hypothesis of Theorem \ref{conj problem 1 K=1} where the requirement 
\begin{equation*}
	1\leq \liminf_{n\to\infty} |\alpha_{n}|^{ \frac{1}{D^n\prod_{i=1}^{n-1} (D_i + d_i)} } <
	\limsup_{n\to\infty}|\alpha_{n}|^{ \frac{1}{D^n\prod_{i=1}^{n-1} (D_i + d_i)} } < \infty
\end{equation*}
is replaced by $\limsup_{n\to\infty}|\alpha_{n}|^{ \frac{1}{D^n\prod_{i=1}^{n-1} (D_i + d_i)} } = \infty$.
This will in particular remove the restriction that the $\alpha_n$ must be of bounded algebraic degree while also slacking the upper bound on $\house{\alpha_n}$.
\paragraph{\textbf{Acknowledgements}}\vspace{5 pt} I thank my supervisor Simon Kristensen for pointing me towards this problem, for helping me in finding the proper literature, and for advising me in the formulation of this paper.

\providecommand{\bysame}{\leavevmode\hbox to3em{\hrulefill}\thinspace}
\providecommand{\MR}{\relax\ifhmode\unskip\space\fi MR }
\providecommand{\MRhref}[2]{%
	\href{http://www.ams.org/mathscinet-getitem?mr=#1}{#2}
}
\providecommand{\href}[2]{#2}


\begin{thebibliography}{8}
	
	\bibitem{Simon+SimonReciprocalAlgInts}
	Simon B. Andersen and Simon Kristensen. ``Arithmetic properties of series of
	reciprocals of algebraic integers''. In: \emph{Monatshefte für Matematik} 190.4 (2019),
	pp. 641–656.
	
	\bibitem{Bugeaud}
	Yann Bugeaud. \emph{Approximation by algebraic numbers}. Vol. 160. Cambridge Tracts
	in Mathematics. Cambridge: Cambridge University Press, 2004.
	
	\bibitem{ErdosIrrOfInfSeries}
	Paul Erd\H{o}s. ``Some problems and results on the irrationality of the sum of infinite series''. In:
	\emph{Journal of Mathematical Sciences} 10 (1975), pp. 1–7.
	
	\bibitem{HanclCriterion}
	Jaroslav Hancl. ``A criterion for linear independence of series''. In: \emph{Rocky Mountain
		Journal of Mathematics} 34.1 (2004), pp. 173–186.
	
	\bibitem{Hancl-Nair}
	Jaroslav Han\v{c}l and Radhakrishnan Nair. ``On the irrationality of infinite series of reciprocals
	of square roots''. In: \emph{Rocky Mountain Journal of Mathematics} 47.5 (2017),
	pp. 1525–1538. \textsc{doi}: \href {https://doi.org/10.1216/RMJ-2017-47-5-1525}
	{\nolinkurl {10.1216/RMJ-2017-47-5-1525}}. \textsc{url}: \url
	{https://doi.org/10.1216/RMJ-2017-47-5-1525}.
	
	\bibitem{Isaacs}
	I.M. Isaacs. ``Degrees of sums in seperable field extensions''. In: \emph{Proc. Am. Math.
		Soc.} 25 (1970), pp. 638–641.
	
	\bibitem{Liouville}
	J. Liouville. ``Nouvelle démonstration d'un théorème sur les irrationelles algébriques, inséré
	dans le compte rendu de la dernière séance''. In: \emph{Comptes rendus de l'Académie des Sciences} 1 (1844), pp. 910–911.
	
	\bibitem{Waldshcmidt}
	M. Waldschmidt. \emph{Diophantine approximation on linear algebraic groups}. Vol. 328.
	Grundlehren der Mathematischen Wissenschaften. Berlin: Springer, 2000.
\end{thebibliography}
\end{document}